\newtheorem{theorem}{Theorem}[section]
\newtheorem{lemma}[theorem]{Lemma}
\newtheorem{corollary}[theorem]{Corrolary}
\theoremstyle{definition}
\newtheorem{definition}[theorem]{Definition}
\newtheorem{proposition}[theorem]{Claim}
\newtheorem{remark}[theorem]{Note}
\newtheorem{note}[theorem]{Note}
\DeclareMathOperator{\Hom}{Hom}
\DeclareMathOperator{\Sing}{Sing}
\DeclareMathOperator{\reg}{reg}
\DeclareMathOperator{\dist}{dist}
\begin{document}
\author{Lev Soukhanov \\ \tiny{National Research University Higher School of Economics}}
\title{Non-Collapsible Dual Complexes and Fake del Pezzo Surfaces}
\pagestyle{plain}
\maketitle
\begin{abstract}
We propose the new construction of complex surfaces with $h^{1, 0} = h^{2, 0} = 0$ from smoothings of normal crossing surfaces with non-collapsible dual complexes and carry it out for the simplest case of the duncehat complex, obtaining the surface with $h^{1,1} = 9$ (presumably Barlow surface).
\end{abstract}

\section{Introduction}
Let $X^{t}$ be a family of complex surfaces with smooth total space and smooth general fibers, and $X^{0} = X = \coprod_{i=1}^n X_i$ be a normal crossing surface. Suppose also that normalization of components $X_i$ are rational surfaces, and their intersection curves are also rational.

Let us denote by $V$ the set of $X_i$'s, by $E$ the set of (normalized) intersection curves (irreducible components of either $X_i \cap X_j$ or $Sing(X_i)$), by $T$ the set of triple points.

\begin{definition} \textbf{Dual complex} $\Delta_X$ of X is a $2$-dimensional triangulated complex $(V, E, T)$.
\end{definition}

\begin{remark} Dual complex is simplicial (has no multiple edges / triangles) if and only if $X$ is simple normal crossing.
\end{remark}

Dual complexes are considered in the various degeneration problems, starting from the famous Kulikov's degenerations of K3 surfaces (\cite{Ku}, \cite{PP}, \cite{Fr}). Lot of work was also put on the dual complexes of degenerations of del Pezzo surfaces and Fano varieties in general (\cite{Ka}, \cite{Fu}, \cite{Tz}). The fundamental work of Kollar, de Fernex and Xu \cite{dFKX} considers the topology of dual complexes of resolutions of singularities. The (higher dimensional) case of Calabi-Yau and hyperkahler varieties is the subject of the vast Gross-Siebert program (\cite{GS}).

\begin{proposition} $H^{*}(\Delta_X, \mathbb{C}) \cong H^{*}(X, \mathcal{O})$
\end{proposition}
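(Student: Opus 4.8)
The plan is to resolve $\mathcal{O}_X$ by the Mayer--Vietoris (Čech-type) complex built from the closed strata of $X$ --- one summand for each simplex of $\Delta_X$ --- and to feed this resolution into its hypercohomology spectral sequence, which degenerates precisely because every stratum, after normalization, has the $\mathcal{O}$-cohomology of a point. This is the familiar mechanism by which the "combinatorial part" of the cohomology of a normal crossing variety is read off from its dual complex.

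Concretely, for a simplex $\sigma$ of $\Delta_X$ write $\nu_\sigma\colon \widetilde Z_\sigma \to X$ for the normalization of the corresponding closed stratum: for a vertex $v\in V$ this is the normalized component $X_v$, for an edge $e\in E$ the normalized double curve $C_e\cong\mathbb{P}^1$, and for a triangle $t\in T$ the triple point $p_t$. Fixing a total order on $V$, I would form the complex of sheaves on $X$
\[
0 \to \mathcal{O}_X \to \bigoplus_{v} (\nu_v)_*\mathcal{O}_{X_v} \to \bigoplus_{e} (\nu_e)_*\mathcal{O}_{C_e} \to \bigoplus_{t} (\nu_t)_*\mathcal{O}_{p_t} \to 0,
\]
with the usual alternating-sum restriction differentials determined by the ordering, and check that it is exact. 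Exactness is local on $X$: étale-locally around a point the configuration looks like a union of $k\le 3$ coordinate hyperplanes, and the statement becomes the standard Koszul-type identity for the coordinate rings of such unions. In the non--simple-normal-crossing case one first pulls back to the normalizations of the components, where the branches again look like coordinate hyperplanes; tracking the two preimages of a self-intersection curve or of a multiple edge is the only bookkeeping that requires attention.

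Next I would run the hypercohomology spectral sequence of this resolution,
\[
E_1^{p,q} = \bigoplus_{\dim\sigma = p} H^q\bigl(\widetilde Z_\sigma,\mathcal{O}_{\widetilde Z_\sigma}\bigr) \ \Longrightarrow\ H^{p+q}(X,\mathcal{O}),
\]
using that $(\nu_\sigma)_*$ is exact so that each summand contributes its own cohomology. Here the hypotheses of the set-up enter: each $X_v$ is a smooth rational surface, hence $H^q(X_v,\mathcal{O})=0$ for $q=1,2$; each $C_e$ is $\mathbb{P}^1$, hence $H^q(C_e,\mathcal{O})=0$ for $q\ge 1$; and each $p_t$ is a reduced point. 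Thus $E_1^{p,q}=0$ for $q>0$, while $E_1^{p,0} = \bigoplus_{\dim\sigma=p}\mathbb{C}$ is exactly the group of simplicial cochains $C^p(\Delta_X;\mathbb{C})$ and $d_1$ is, by construction of the restriction maps, the simplicial coboundary. The spectral sequence therefore collapses at $E_2$, giving $H^p(X,\mathcal{O}) \cong H^p\bigl(C^\bullet(\Delta_X;\mathbb{C})\bigr) = H^p(\Delta_X,\mathbb{C})$, as claimed.

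The one genuinely delicate point is the construction of the resolution with a globally consistent sign convention when $\Delta_X$ is not simplicial; I expect to handle this by working with the ordered-simplex cochain model, which is canonically produced by the normalization data and computes the same cohomology as the (oriented) simplicial cochain complex. Everything else --- the vanishing of the higher $\mathcal{O}$-cohomology of the strata, and the degeneration of the spectral sequence --- is then immediate.
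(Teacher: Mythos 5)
Your proposal is correct and follows essentially the same route as the paper: the same Mayer--Vietoris resolution of $\mathcal{O}_X$ by the (normalized) strata indexed by simplices of $\Delta_X$, with the higher cohomology of each term killed by the rationality hypotheses so that the hypercohomology reduces to the simplicial cochain complex. You simply spell out the local exactness check and the spectral-sequence degeneration that the paper leaves implicit.
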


\begin{proof}
The sheaf $\mathcal{O}$ admits the following resolution:
\[0  \rightarrow \mathcal{O}_X \rightarrow \bigoplus_{X_i \in V} \mathcal{O}_{X_i} \rightarrow \bigoplus_{C \in E} \mathcal{O}_{C} \rightarrow \bigoplus_{p \in T} \mathcal{O}_{p}  \rightarrow 0 \]

Cohomologies of the sheaves of this resolution are zero except of $h^0 = 1$ due to rationality of the components, which proves the claim.
\end{proof}

\begin{proposition}
$H^{*}(\Delta_X, \mathbb{C}) \cong H^{*}(X, \mathcal{O}) \simeq H^{*, 0} (X^{t})$
\end{proposition}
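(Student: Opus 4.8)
The first isomorphism is nothing but the preceding Claim, so the plan is to concentrate on the remaining relation $H^{q}(X,\mathcal O_X)\simeq H^{q,0}(X^{t})$ for all $q$. First I would use Hodge symmetry on the smooth projective surface $X^{t}$ to write $H^{q,0}(X^{t})\simeq\overline{H^{0,q}(X^{t})}=\overline{H^{q}(X^{t},\mathcal O_{X^{t}})}$, which reduces everything to the numerical equality $h^{q}(X,\mathcal O_X)=h^{q}(X^{t},\mathcal O_{X^{t}})$ (the canonical vector-space identification then comes along for free once the base-change argument below is in place). For $q=0$ this is immediate since $X$ is connected, so the actual content sits in $q=1,2$. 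Note that, unlike the first isomorphism, this step uses neither rationality of the components nor the dual complex.

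Next, upper semicontinuity of cohomology along the flat family $\mathcal X\to B$ gives $h^{q}(X,\mathcal O_X)\ge h^{q}(X^{t},\mathcal O_{X^{t}})$ for nothing; the work is in the reverse inequality. For that I would invoke the fact that a normal crossing variety has Du Bois singularities — concretely, the resolution $\mathcal O_X\to\bigoplus_i\mathcal O_{X_i}\to\bigoplus_C\mathcal O_C\to\bigoplus_p\mathcal O_p$ already written down in the proof of the preceding Claim is precisely the quasi-isomorphism $\mathcal O_X\xrightarrow{\ \sim\ }\underline\Omega^{0}_X$ onto the zeroth piece of the Du Bois complex. For a flat proper degeneration with Du Bois central fibre the sheaf $R^{q}f_{*}\mathcal O_{\mathcal X}$ is locally free near the special point and compatible with base change, which yields $h^{q}(X,\mathcal O_X)=h^{q}(X^{t},\mathcal O_{X^{t}})$ together with the sought vector-space identification. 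Equivalently, once the family is put in semistable form, I would run Steenbrink's computation of the limit mixed Hodge structure on $H^{q}(X^{t})$ through the logarithmic de Rham complex of $\mathcal X/B$: this identifies $H^{q}(X,\mathcal O_X)=\mathbb H^{q}(X,\underline\Omega^{0}_X)$ with $\mathrm{Gr}^{0}_{F}H^{q}(X^{t}_{\infty})$, and since by Schmid's nilpotent orbit theorem the Hodge subbundles $F^{p}\mathcal H^{q}$ extend across the puncture, $\dim\mathrm{Gr}^{0}_{F}H^{q}(X^{t}_{\infty})=h^{0,q}(X^{t})=h^{q,0}(X^{t})$.

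The hard part is exactly this reverse inequality: flatness alone only gives constancy of $\chi(\mathcal O_X)=h^{0}-h^{1}+h^{2}$, and this together with semicontinuity is genuinely insufficient, since $h^{1}$ and $h^{2}$ could a priori jump simultaneously by the same amount — so some honest Hodge-theoretic input (Du Bois deformation invariance, or degeneration of the limit Hodge–de Rham spectral sequence, i.e. the relevant slice of the Clemens–Schmid sequence) cannot be avoided. A secondary technical nuisance is that Steenbrink's machinery presupposes a semistable model (reduced central fibre inside a smooth total space); if the given family is not already of that form I would first reduce to a semistable one by base change and resolution, observing that this changes neither the birational class of $X^{t}$ nor the numbers $h^{q,0}(X^{t})$, and then argue there. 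Combining the two inequalities gives $h^{q}(X,\mathcal O_X)=h^{q}(X^{t},\mathcal O_{X^{t}})=h^{q,0}(X^{t})$ for all $q$, and with the first isomorphism this is the assertion.
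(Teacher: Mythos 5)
Your argument is correct and is in substance the same as the paper's, which simply cites the results of Schmid and Steenbrink (Proposition 2.16 of \cite{St}); you have unpacked exactly the content of that citation — semicontinuity in one direction, and the degeneration/Du Bois (equivalently, limit mixed Hodge structure plus nilpotent orbit) input for the reverse inequality.
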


\begin{proof}
It follows from the results of Schmid and Steenbrink, see proposition 2.16 from \cite{St}.
\end{proof}

The dual complex captures also some important algebro-geometric information about both $X$ and general fiber $X_{t}$. One of the important things is that while running relative MMP on the family $X_t$, the contractions change the structure of $\Delta_X$ in a controllable way - they correspond to \textbf{elementary collapses}.

\begin{definition} \textbf{Elementary collapse} is an operation on a triangulated complex which removes a simplex and one of its faces. It is required that the simplex being removed is not a facet to anything, and its facet is free (not a facet of any other simplex).
\end{definition}

The complex being collapsible is much stronger than being contractible. There are many examples known in geometric topology, simplest ones being dunce hat and Bing's house with two rooms.

\begin{theorem}(Theorem $4$ of \cite{dFKX})

Suppose the general fiber $X_t$ is rational. Then, $\Delta_X$ is collapsible.
\end{theorem}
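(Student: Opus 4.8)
The plan is to reduce everything to a relative Minimal Model Program on the total space of the family, which is the strategy of de Fernex--Koll\'ar--Xu. Write $\mathcal{X}$ for the smooth total space of $X^t$ and $B$ for the base curve, so that $X = X^0 = f^{-1}(0)$ is the reduced central fibre and $(\mathcal{X}, X^0)$ is log smooth, hence dlt; since $X^0 \sim_B 0$, the $(K_{\mathcal{X}} + X^0)$-MMP over $B$ coincides with the $K_{\mathcal{X}}$-MMP over $B$. As $X_t$ is rational it is rationally connected, so $K_{X_t}$ is not pseudoeffective and this relative MMP cannot terminate in a minimal model; it must terminate with a Mori fibre space $g \colon \mathcal{X}^m \to Z$ over $B$ with $\dim Z < \dim \mathcal{X}$. (Existence and termination of the steps are classical for threefolds, which is the case at issue, and hold in general under the rational connectedness hypothesis.) At every stage $j$ one keeps the whole reduced central fibre $D^j \subset \mathcal{X}^j$ as boundary; one checks that $(\mathcal{X}^j, D^j)$ stays dlt, that $D^j$ remains the full reduced fibre over $0$, and that its dual complex $\mathcal{D}(D^j)$ is defined, with $\mathcal{D}(D^0) = \Delta_X$.

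The core is a combinatorial lemma describing how a single elementary step changes the dual complex. Each step is a flip or a divisorial contraction, and one verifies case by case that $\mathcal{D}(D^j)$ is obtained from $\mathcal{D}(D^{j+1})$ by a PL homeomorphism or by elementary (anti)collapses, so that collapsibility of $\mathcal{D}(D^{j+1})$ forces collapsibility of $\mathcal{D}(D^j)$. The cases are: (i) a flip, an isomorphism in codimension one that alters only the codimension-two strata; (ii) a divisorial contraction of a vertical divisor, i.e.\ of a component $X_i$ of the central fibre, which deletes the vertex $v_i$ and, by the local dlt structure, does so by a sequence of elementary collapses folding the star of $v_i$ into the rest of the complex; (iii) a divisorial contraction of a horizontal divisor, which fixes the vertex set and only re-triangulates a neighbourhood. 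In each case the local model is a dlt pair with a $(K+D)$-negative extremal ray, which is rigid enough -- in the threefold case via the explicit classification of extremal contractions -- to read off the effect on $\mathcal{D}$.

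It remains to handle the base case: the dual complex $\mathcal{D}(D^m)$ of the central fibre of the Mori fibre space $g\colon\mathcal{X}^m\to Z$ is collapsible, in fact a point. Here $\rho(\mathcal{X}^m/Z)=1$ and $-(K_{\mathcal{X}^m}+D^m)$ is $g$-ample; this forces the components of $D^m$ to be mutually proportional over $Z$, and restricting $g$ over the central fibre of $Z\to B$ reduces the claim to the same statement in one dimension less, so that an induction on $\dim\mathcal{X}$ finishes it, the lowest-dimensional case being immediate. Combining the two halves, $\Delta_X = \mathcal{D}(D^0)$ collapses onto $\mathcal{D}(D^m)=\mathrm{pt}$ and is therefore collapsible.

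The main obstacle is the combinatorial lemma of the second paragraph: flips and contractions of horizontal divisors act on the codimension-two strata -- hence on the edges and triangles of $\Delta_X$ -- in ways not visible from the codimension-one birational geometry, so one needs the fine local classification of dlt extremal contractions to see that the induced move on $\mathcal{D}$ is really a collapse and not merely a homotopy equivalence. Two pieces of bookkeeping also require care: that $D^j$ stays equal to the full reduced central fibre rather than a strict transform, and dlt-ness with this boundary is preserved along the program; and that $\mathcal{D}$ is genuinely a PL invariant independent of the chosen log resolution. By comparison, the base case is soft once the Mori fibre structure and the dimension induction are set up.
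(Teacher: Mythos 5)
The paper does not prove this statement at all: it is quoted verbatim as Theorem~4 of \cite{dFKX} and used as a black box (its only role here is to motivate looking for non-collapsible dual complexes as certificates of non-rationality). So there is no in-paper proof to compare against; what you have written is a reconstruction of the argument in the cited reference, and at the level of architecture it is the right one: run the relative $K_{\mathcal{X}}$-MMP over the base (equal to the $(K_{\mathcal{X}}+X^0)$-MMP since the reduced central fibre is linearly trivial over $B$), use uniruledness of the rational general fibre to force termination at a Mori fibre space, track the dual complex of the reduced central fibre through each step, and handle the Mori-fibre-space endpoint by induction on dimension. That is exactly how de Fernex--Koll\'ar--Xu proceed.

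However, as a proof your text has a genuine gap, and it is the one you yourself name in the last paragraph: the assertion that each flip or divisorial contraction changes $\mathcal{D}(D^j)$ by a PL homeomorphism or by elementary collapses is the entire content of the theorem, and ``one verifies case by case'' does not discharge it. The danger is real: an MMP step can destroy and create strata in codimension two and higher, so a priori one only gets a homotopy equivalence of dual complexes, and homotopy equivalence does not imply collapsibility (the dunce hat, the very example this paper is built on, is contractible but not collapsible). Showing that the induced move is an honest collapse requires the local classification of dlt extremal contractions and a careful analysis of how the strata of the boundary behave, which in \cite{dFKX} occupies the bulk of the argument; nothing in your sketch substitutes for it. Two smaller points also need attention: you must verify that the boundary stays equal to the full reduced central fibre (not merely its strict transform) and that dlt-ness is preserved with this boundary after each step; and in the base case the claim that $\rho(\mathcal{X}^m/Z)=1$ forces $D^m=g^{-1}(Z_0)$ with all non-dominant divisors pulled back from $Z$ deserves a sentence of justification before the dimension induction can start. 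If your goal is only to justify quoting the theorem, citing \cite{dFKX} as the paper does is the honest course; if you want a self-contained proof, the combinatorial lemma must actually be proved.
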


In \cite{dFKX} it is stated that the authors do not know any examples of non-collapsible yet contractible dual complexes. The main point of this paper is constructing such an example from a dunce hat complex. We use, essentially, smoothing techniques from \cite{Fr} and \cite{KN}.

We also believe it is very natural to consider such an examples: they are automatically surfaces with $h^{1, 0} = h^{2, 0} = 0$, and while author didn't manage to prove that they are simply connected, the fundamental group probably can be analyzed too, and search of such surfaces is a longstanding problem.

We consider this text as more of a proof of concept - while the construction is quite nice, the proofs in this text use some arguments about the structure of the moduli spaces which are not clean. We hope that most of the construction is possible to generalize, but did not manage to do it yet.

I would like to thank quite a lot of people. This work would not be possible at all without the contribution of Konstantin Loginov, who familiarized me with the works of Y. Kachi and N. Tziolas and guided through the Mori theory (which I'm quite shaky at). Discussions with Denis Teryoshkin, Dmitry Sustretov, Andrei Losev, Vadim Vologodsky and Dmitry Korb also helped me a lot. I would also like to thank Mikhail Kapranov for his suggestion on the connection of this work with possibly constructing phantom categories combinatorially (connection which is not yet clear but clearly important).

I also would like to specially thank participants of Lutsinofest for the discussions which helped me to figure out the gap in the first version of the proof, and Kostya Ivanov for the pictures of cubics.

The study has been funded by the Russian Academic Excellence Project '5-100'.

\section{Recollection on smoothing}
The question on the possibility of smoothing of the normal crossing variety, we believe, starts with the work of Friedman \cite{Fr}. He introduced the following notion:

\begin{definition}
The normal crossing variety $X$ is called $d$-semistable if its first tangent cohomology sheaf $T_X^1$ is isomorphic to $\mathcal{O}_{Sing(X)}$.
\end{definition}

\begin{remark}
For simple normal crossing variety \[T_X^1 \simeq \Hom(\bigotimes_i I_{X_i} / I_{X_i}I_{\Sing(X)}, \mathcal{O}_{\Sing(X)})\] For normal crossing varieties this identification works locally. Hence, $T_X^{1}$ is always a line bundle over the set $\Sing(X)$. For a surface, the restriction of $T_X^{1}$ on the component $C = X_i \cap X_j$ can be identified with $N_{C, X_i} \otimes N_{C, X_j} (\sum_k p_{ijk})$, where $p_{ijk} \in T$, vanishing of the $c_1$ of this line bundle is known as ''triple points formula''.
\end{remark}

The $d$-semistability is not sufficient for guaranteeing the existence of the smoothing. The corresponding obstruction is governed by log deformations theory. The source we are using is the paper of Kawamata and Namikawa \cite{KN} (it is not written in the modern language of log schemes, but has an advantage of being in the analytic category and is fine for our purposes).

\begin{definition}[Kawamata-Namikawa]
The \textbf{log atlas} on a normal crossing variety $X$ of dimension $n-1$ is a collection of charts $\{U_i\}$ on the singular locus $\Sing(X)$ together with holomorphic coordinate functions $u_1^i, ... u_n^i$, such that $U_i$ is identified to the open subset of the variety given in $\mathbb{C}^n$ by equations $u_{s_1}^i \cdot ... \cdot u_{s_k}^i = 0$ for some subset $\{s_1 ... s_k\} \in \{1...n\}$, and choice of permutations $\sigma^{ij}$ and holomorphic functions $z_k^{ij}$ on $U_i \cap U_j$, such that $z_k^{ij} u_k^j = u_{\sigma^{ij}(k)}^i$ (the choice of $z$'s is non-unique due to both functions sometimes vanishing on some irreducible components, yet unique on a singular locus). It is required that $\prod_{k} z_k^{ij} = 1$.

Two log atlases are called equivalent if their union is a log atlas. The classes of equivalence of log atlases are called  semistable log structures.
\end{definition}

\textit{Terminological note - in the modern language, these objects should probably be called semistable morphisms but we are going to use terminology from \cite{KN}}.

\begin{proposition}[Kawamata-Namikawa]
Variety is $d$-semistable if and only if it admits a semistable log structure.
\end{proposition}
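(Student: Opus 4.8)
The plan is to read the statement as a local-to-global principle. A semistable log structure of the required type always exists in a neighbourhood of every point --- namely the standard toric one on a normal crossing chart, given by the monoid map $\mathbb{N}^{\{\text{branches}\}}\to\mathcal{O}$, $e_k\mapsto u_k$ --- so the whole content of the proposition is that these local models glue precisely when $X$ is $d$-semistable. I would run the argument through the line bundle $T^1_X$ on $\Sing(X)$, exploiting the presentation $T^1_X\cong\Hom\bigl(\bigotimes_i I_{X_i}/I_{X_i}I_{\Sing(X)},\,\mathcal{O}_{\Sing(X)}\bigr)$ recorded in the Note above: on each chart $U_i$ of a fixed normal crossing atlas, the coordinates $u_1^i,\dots,u_n^i$ single out a local nowhere-vanishing section $\xi_i$ of $T^1_X$, the functional dual to the tensor product of the branch equations $u_k^i$.

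The implication ``$X$ admits a semistable log structure $\Rightarrow$ $X$ is $d$-semistable'' I would handle first, since it also produces the key comparison. Given a log atlas, compare $\xi_i$ and $\xi_j$ on an overlap $U_i\cap U_j$: the coordinate change there rescales the tensor product of branch equations, hence rescales $\xi_i$ relative to $\xi_j$, by a unit assembled from the transition functions $z_k^{ij}$, and the defining relation $\prod_k z_k^{ij}=1$ of a log atlas is exactly what forces this unit to be $1$. Hence the $\xi_i$ patch into a global nowhere-vanishing section of $T^1_X$, i.e. $T^1_X\cong\mathcal{O}_{\Sing(X)}$, which is $d$-semistability.

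For the converse, which I expect to be the more computational direction: given an arbitrary normal crossing atlas, set $d_{ij}:=\prod_k z_k^{ij}\in\Gamma(U_i\cap U_j\cap\Sing(X),\mathcal{O}^*)$. By the comparison just made, $\{d_{ij}\}$ is a \v{C}ech $1$-cocycle whose class in $\operatorname{Pic}(\Sing(X))$ vanishes if and only if $T^1_X\cong\mathcal{O}_{\Sing(X)}$. So, assuming $X$ is $d$-semistable, we may write $d_{ij}=b_i/b_j$ with $b_i\in\Gamma(U_i\cap\Sing(X),\mathcal{O}^*)$; extend each $b_i$ to a holomorphic function on $U_i$, nowhere zero after shrinking $U_i$, and rescale one coordinate of each chart by $b_i^{-1}$. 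The transition functions of the rescaled atlas then satisfy $\prod_k z_k^{ij}=d_{ij}\,b_j/b_i=1$, so it is a log atlas. (That rescaling a coordinate by a unit preserves the shape of a normal crossing chart, and that the $z$'s transform as stated, are routine checks.)

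The step I expect to be the main obstacle is the bookkeeping behind the comparison in the second paragraph: one has to track the permutations $\sigma^{ij}$, separate branch from non-branch coordinates, and --- most delicately --- deal with the triple points, where $\Sing(X)$ is locally a union of three coordinate axes and the twist $\mathcal{O}_C(\sum_k p_{ijk})$ in $T^1_X|_C\cong N_{C,X_i}\otimes N_{C,X_j}(\sum_k p_{ijk})$ must be accounted for, in order to be certain that ``$\prod_k z_k^{ij}=1$ on every overlap'' is genuinely equivalent to ``the local generators $\xi_i$ of $T^1_X$ glue''. Once that is settled both implications are immediate; the only remaining loose end, surjectivity of $\mathcal{O}^*(U_i)\to\mathcal{O}^*(U_i\cap\Sing(X))$ on small charts, is standard (Cartan's theorem~B on a Stein chart) and at worst costs a refinement of the cover, which does not change the class in $\operatorname{Pic}(\Sing(X))$.
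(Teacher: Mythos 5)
Your proposal is correct and, for the direction the paper actually sketches (log atlas $\Rightarrow$ $d$-semistable), it is exactly the paper's argument: the coordinates $u^i_k$ trivialize $\bigotimes_i I_{X_i}/I_{X_i}I_{\Sing(X)}$ chart by chart, and $\prod_k z_k^{ij}=1$ is precisely the gluing condition. The converse, which the paper does not prove but delegates to Friedman and Kawamata--Namikawa, you supply via the standard \v{C}ech argument (write $\prod_k z_k^{ij}$ as a coboundary $b_i/b_j$ on $\Sing(X)$, extend the $b_i$ to units on shrunken charts, and absorb $b_i^{-1}$ into one branch coordinate); this is the argument in the cited sources, so you have in effect filled in the part the paper omits rather than diverged from it. The only point worth double-checking in your bookkeeping is that when you rescale ``one coordinate of each chart'' it must be a branch coordinate, and that the permutations $\sigma^{ij}$ may not match up the rescaled coordinates across charts --- but since the product $\prod_k z_k^{ij}$ is taken over all $k$, the net effect is still multiplication by $b_j/b_i$, so your conclusion stands.
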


\begin{proof}
This proposition is implicitly contained already in Friedman's work. While we won't reproduce the full proof here, we sketch one of the directions, needed further:

being $d$-semistable means the triviality of the line bundle $\bigotimes_i I_{X_i} / I_{X_i} I_{\Sing(X)}$ on a singular locus. The collections of $u^k_i$ define such trivializations on $U_k$.
\end{proof}

Semistable log structure could be thought about as ''zero-order'' smoothing deformation of $X$. In particular, it allows one to define logarithmic differential forms and logarithmic tangent bundle.

\begin{definition} The bundle of \textbf{logarithmic vector fields} $T_X(log)$ is defined as follows. The logarithmic vector field $v$ is a vector field on $X$ which preserves the semistable log structure (i.e. its flow moves atlas to an equivalent one). In the concrete terms it means that $\frac{1}{u^k_1}\frac{\partial u^k_1}{\partial v} + ... + \frac{1}{u^k_n}\frac{\partial u^k_n}{\partial v} = 0$.
\end{definition}

\begin{definition} Logarithmic deformation of $X$ is a deformation family $X^{t}$ endowed with the atlas over the $Sing(X^{0})$ such that $u^k_1 \cdot ... \cdot u^k_n = t$ and its restriction on $X^{0}$ defines the semistable log-structure of $X$.
\end{definition}

\begin{proposition}[logarithmic deformation theory]
The obstructions to the log deformation of $X$ lie in $H^2(X, T_X(log))$. The thickenings from order $k$ deformations to order $k+1$ deformations form a torsor overs $H^1(X, T_X(log))$.
\end{proposition}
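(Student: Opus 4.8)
The plan is to run the classical Kodaira--Spencer gluing argument, following \cite{KN}, with the sheaf $T_X(log)$ of logarithmic vector fields replacing the ordinary tangent sheaf, over the Artinian bases $A_k=\mathbb{C}[t]/(t^{k+1})$; passing from an order-$k$ deformation to an order-$(k+1)$ one means extending a logarithmic deformation over $A_k$ to one over $A_{k+1}=\mathbb{C}[t]/(t^{k+2})$, the only new parameter being $t^{k+1}$. First I would record the \emph{local rigidity} of the model: fixing a log atlas $\{U_i,u^i_1,\dots,u^i_n\}$, on each chart the semistable log structure is pulled back from the standard log point along $u^i_1\cdots u^i_n$, the tautological family $\{u^i_1\cdots u^i_n=t\}$ is a logarithmic deformation of $U_i$, and every logarithmic deformation of $U_i$ over an Artinian base is isomorphic, via a log automorphism, to this one. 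Consequently, given an order-$k$ logarithmic deformation $X_k/A_k$ of $X=X_0$, each restriction $X_k|_{U_i}$ extends to some $\xi_i$ over $A_{k+1}$, and the entire problem is the gluing of the $\xi_i$.

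Next I would produce the obstruction. On $U_{ij}:=U_i\cap U_j$ the two extensions $\xi_i|_{U_{ij}}$ and $\xi_j|_{U_{ij}}$ both restrict to $X_k|_{U_{ij}}$, and the set of extensions of a fixed $A_k$-deformation over $A_{k+1}$ is (when nonempty) a torsor under $\Gamma(U_{ij},T_X(log))$, once the generator $t^{k+1}$ of $(t^{k+1})/(t^{k+2})$ is fixed: an infinitesimal automorphism of an $A_{k+1}$-extension that is the identity modulo $t^{k+1}$ has the form $\mathrm{id}+t^{k+1}v$, and the condition that it preserve the log structure, i.e. the equation $\prod_m u^k_m=t$, is exactly the defining relation $\sum_m (1/u^k_m)(\partial u^k_m/\partial v)=0$, so $v\in T_X(log)$. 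Let $\theta_{ij}\in\Gamma(U_{ij},T_X(log))$ be this difference. On triple overlaps the composite $\theta_{ij}+\theta_{jk}+\theta_{ki}$ is a $\delta$-closed \v{C}ech $2$-cochain (because each $\xi_i$ is an honest local extension), and its class is the obstruction $\mathrm{ob}(X_k)\in H^2(X,T_X(log))$; it vanishes iff the $\xi_i$ can be glued.

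If $\mathrm{ob}(X_k)=0$, write $\theta_{ij}=\eta_j-\eta_i$ with $\eta_i\in\Gamma(U_i,T_X(log))$ and replace each $\xi_i$ by $\exp(-t^{k+1}\eta_i)\cdot\xi_i$; the new local extensions agree on overlaps and patch to a global order-$(k+1)$ logarithmic deformation. This proves the obstruction lies in $H^2(X,T_X(log))$. For the torsor statement: if the set of order-$(k+1)$ extensions of $X_k$ is nonempty, any two of them restrict on each $U_i$ to local extensions differing by some $\eta_i\in\Gamma(U_i,T_X(log))$, these patch to a \v{C}ech $1$-cocycle, a different choice of the intermediate local extensions alters it by a coboundary, and conversely every class of $H^1(X,T_X(log))$ transforms a given global extension into another; since $X$ is compact these cohomology groups are finite-dimensional.

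The one genuinely non-formal point — and hence the main obstacle — is the local rigidity invoked in the first step together with the identification of the relative log automorphism sheaf with $T_X(log)$. Conceptually this is the assertion that $X$ equipped with its semistable log structure is log smooth over the standard log point, so that its logarithmic cotangent complex is concentrated in degree $0$ and equals $\Omega^1_X(log)$, whence $\mathrm{Ext}^i_{\mathcal{O}_X}(\Omega^1_X(log),\mathcal{O}_X)=H^i(X,T_X(log))$ and the proposition reduces to the standard deformation exact sequence; everything else is bookkeeping identical to the smooth Kodaira--Spencer case. Carrying this out by hand in the analytic category, keeping careful track of the transition functions $z^{ij}_m$ with $\prod_m z^{ij}_m=1$, is the part of the argument that needs genuine care.
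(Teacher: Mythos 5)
The paper offers no proof of this proposition at all — it is quoted as a known result from Kawamata--Namikawa \cite{KN} — and your \v{C}ech/Kodaira--Spencer argument (local rigidity of the log-smooth local model $u_1\cdots u_n=t$, identification of infinitesimal log automorphisms with $T_X(log)$, obstruction $2$-cocycle, $H^1$-torsor of extensions) is exactly the standard route taken in that reference, with the genuinely non-formal input (log smoothness over the standard log point) correctly isolated. One bookkeeping slip to repair when writing it out: the obstruction cocycle must arise from composing chosen isomorphisms $\xi_j|_{U_{ij}}\cong\xi_i|_{U_{ij}}$ over $X_k|_{U_{ij}}$ around triple overlaps (such isomorphisms exist on Stein opens but are non-unique, forming a torsor under $\Gamma(U_{ij},T_X(log))$), not as $\delta$ of a well-defined ``difference'' $\theta_{ij}$ of isomorphism classes of local extensions, since that coboundary would vanish identically and yield no obstruction.
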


We are now ready to present the main example of our consideration.

\section{Example}
\begin{definition} We will call the collection of $X_i$'s with the preimages of intersection curves $C_{ij}$ and gluing maps $\phi_{ij}: C_{ij} \rightarrow C_{ji}$ the \textbf{construct}. The dual complex should be thought as the assembly instructions for this construct. We also remark that the curves $C_{ij}$ correspond to the half-edges of the dual complex, and dual graph of the collection of curves $C_{ij}$ on $X_i$ corresponds to the link of the vertex $X_i \in V$. The construct can also be considered for the non simple normal crossing (curves still should correspond to half-edges of the dual complex).
\end{definition}

\begin{definition} The duncehat complex is a triangle $(123)$ with edges $(12), (13)$ and $(23)$ all glued together.
\end{definition}

\textbf{Construct}. Let us consider a pair of nodal cubics on $\mathbb{P}^2$ in general position, and blow up $8$ points of their intersection. Also, blow up $1$ more point on the second cubic (it will force triple point formula). Let us set $X_1$ to be this $\mathbb{P}^2$ blown up in nine points, and normalizations of proper preimages of these cubics as $C_1$ and $C_2$. Let us denote by $p_1, p_2 \in C_1$ the preimages of the node of the first cubic, $q_1, q_2 \in C_2$ preimages of the node on the second cubic, $p_3 \in C_1, q_3 \in C_2$ the preimages of their only remaining intersection point. Let us set the gluing map $\phi: C_1 \rightarrow C_2$ by the images of $3$ points: $\phi(p_1) = q_2, \phi(p_2) = q_3, \phi(p_3) = q_1$.

\begin{center}
\includegraphics[trim={1cm 0cm 1cm 0cm}, clip, width=6cm]{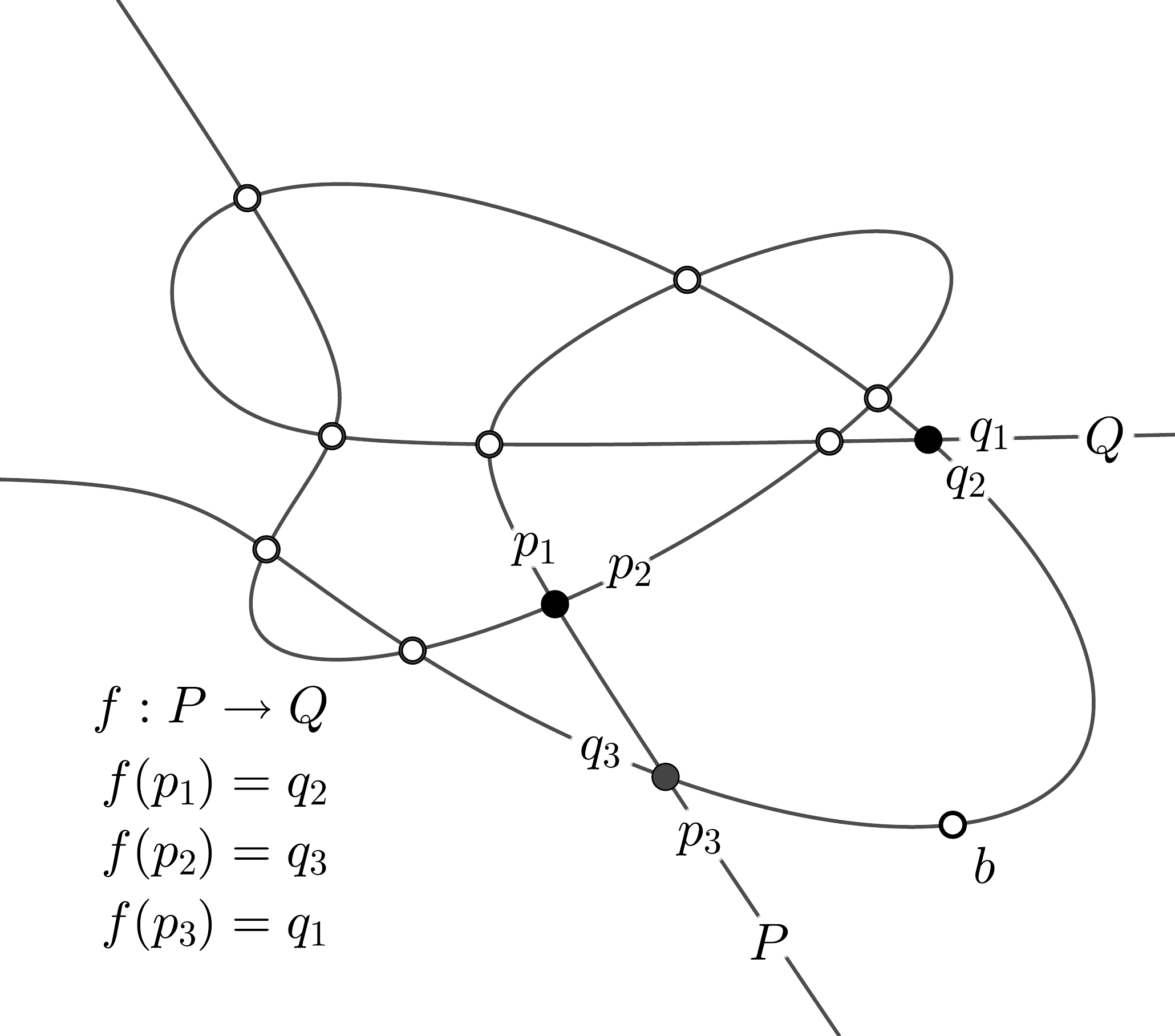}
\end{center}

\begin{proposition} Dual complex of this construct is a duncehat complex.
\end{proposition}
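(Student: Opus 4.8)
The plan is to verify that the combinatorial data of the construct — one surface $X_1$, one gluing curve (from identifying $C_1$ with $C_2$), and the triple points — assembles exactly into the duncehat complex as defined above. Since there is a single component $X_1$, the dual complex has a single vertex $v$, so I must show it carries one edge (a loop at $v$) and one triangle (glued along all three of its sides to that loop).

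First I would identify the edges. The singular locus of $X = X_1/\!\sim$ consists of the image of $C_1 \cong C_2$ under the gluing, a single irreducible rational curve $C$; all of its points are double points of $X$ except where the gluing forces higher multiplicity. Because both $C_1$ and $C_2$ lie on the same component $X_1$, the edge $E = \{C\}$ is a loop based at the unique vertex $v$. Thus $\Delta_X$ has one vertex and one edge, the loop $(12)=(13)=(23)$ collapsed to a single $1$-cell — matching the edge-identifications in the definition of the duncehat.

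Next I would pin down the triple points, i.e. the set $T$. A triple point of $X$ arises at a point of $\Sing(X)$ where three local branches of $X_1$ meet after gluing; concretely these are the points of $C_1$ that get identified, under $\phi$ composed with the self-gluing, with points of $C_2$ that are themselves nodes, together with the node of the image curve. With the prescribed gluing $\phi(p_1)=q_2$, $\phi(p_2)=q_3$, $\phi(p_3)=q_1$, one checks that the six marked points $p_1,p_2,p_3,q_1,q_2,q_3$ descend to exactly three points of $X$: the pairs $\{p_1,q_2\}$, $\{p_2,q_3\}$, $\{p_3,q_1\}$ are glued, while the nodal identifications $p_1\sim p_2$ on $C_1$ (image of the first cubic's node) and $q_1\sim q_2$ on $C_2$ (image of the second cubic's node) further collapse these. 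Tracking the equivalence relation generated by $p_1\sim p_2$, $q_1\sim q_2$, $p_3 \sim q_3$(? one must be careful here) and the $\phi$-identifications, I expect to land on exactly three triple points $\{P,Q,R\}$, which are precisely the three vertices of the duncehat triangle after the edge-gluings. I would then confirm that the single $2$-cell of $\Delta_X$ — the triangle dual to $X_1$, whose boundary edges are the three arcs into which $C_1$ is divided by $p_1,p_2,p_3$ — has all three sides identified with the loop $E$, with the triangle's three corners mapping to $P,Q,R$ exactly as in the duncehat presentation $(123)$ with $(12),(13),(23)$ glued.

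The main obstacle is the careful bookkeeping of the equivalence relation on the six marked points and the three arcs: one must match the cyclic ordering of $p_1,p_2,p_3$ along $C_1$ (equivalently, the link of $v$, i.e. the dual graph of the arcs of $C_1$ in $X_1$) against the ordering of $q_1,q_2,q_3$ along $C_2$ transported by $\phi$, and check that the induced identification of the triangle's three edges with the single loop is the one producing the duncehat rather than, say, a sphere or $\mathbb{RP}^2$-with-boundary-type collapse. In other words, the orientation data in $\phi(p_1)=q_2,\ \phi(p_2)=q_3,\ \phi(p_3)=q_1$ is exactly what has to be verified to reproduce the duncehat's gluing pattern (all three edges identified with the \emph{same} orientation), and that is the one genuinely non-formal step; everything else is reading off $V$, $E$, $T$ from the construct.
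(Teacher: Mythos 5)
Your overall strategy is the same as the paper's: read off $V$, $E$, $T$ from the construct and then check that the edge/orientation identifications reproduce the duncehat rather than some other quotient of a triangle; your closing paragraph correctly isolates the orientation of the three edge-identifications as the one genuinely non-formal step, which is exactly where the paper puts the emphasis. However, there is a concrete error in your bookkeeping of the triple points. In the dual complex $(V,E,T)$ the triple points are the $2$-simplices, not the vertices: the vertices are the components (here a single vertex, since there is one component $X_1$), the edges are the double curves (here a single loop), and each triple point contributes one triangle. Since the duncehat has exactly one $2$-cell, you must show there is exactly \emph{one} triple point -- and indeed there is: the equivalence relation generated by $p_1\sim p_2$ (node of the first cubic), $q_1\sim q_2$ (node of the second cubic), $p_3\sim q_3$ (the remaining intersection point), together with $p_1\sim q_2$, $p_2\sim q_3$, $p_3\sim q_1$, identifies \emph{all six} marked points into a single point of $X$, not three. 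Your "three points $\{P,Q,R\}$" are really the three pairs $\{p_1,q_2\}$, $\{p_2,q_3\}$, $\{p_3,q_1\}$, which are the three local branches of $\Sing(X)$ through that one point, i.e.\ the three edge-germs of the single $2$-simplex; they are neither three distinct triple points nor vertices of the dual complex. Had there genuinely been three triple points, the dual complex would have three triangles and could not be the duncehat.

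What remains to be checked at that single point, and what your proposal does not verify, is that $X$ is actually normal crossing there: the three surface germs of $X_1$ (at the node of the first cubic, the node of the second cubic, and the residual intersection point) meet along three curve branches, each shared by exactly two of the germs, so that the local model is $xyz=0$; the paper disposes of this by "direct inspection" and a picture. Once that is done, the single triangle has its three vertices all mapping to the unique vertex $X_1$ and its three edges all mapping to the unique loop, and the cyclic orientation data at the triple point (your final paragraph) is what pins down the duncehat gluing pattern. With the triple-point count corrected and the normal-crossing verification added, your argument goes through and coincides with the paper's.
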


\begin{proof} It is more or less direct inspection to see that the resulting variety is normal crossing. It is also easy to see that triple point formula is satisfied. The only question remaining is the orientation of edges, which is defined from the cyclic orientation on the edges of the triple point. The following picture illustrates the situation around the triple point: \end{proof}
\begin{center}
\includegraphics[trim={1cm 0cm 1cm 0cm}, clip, width=6cm]{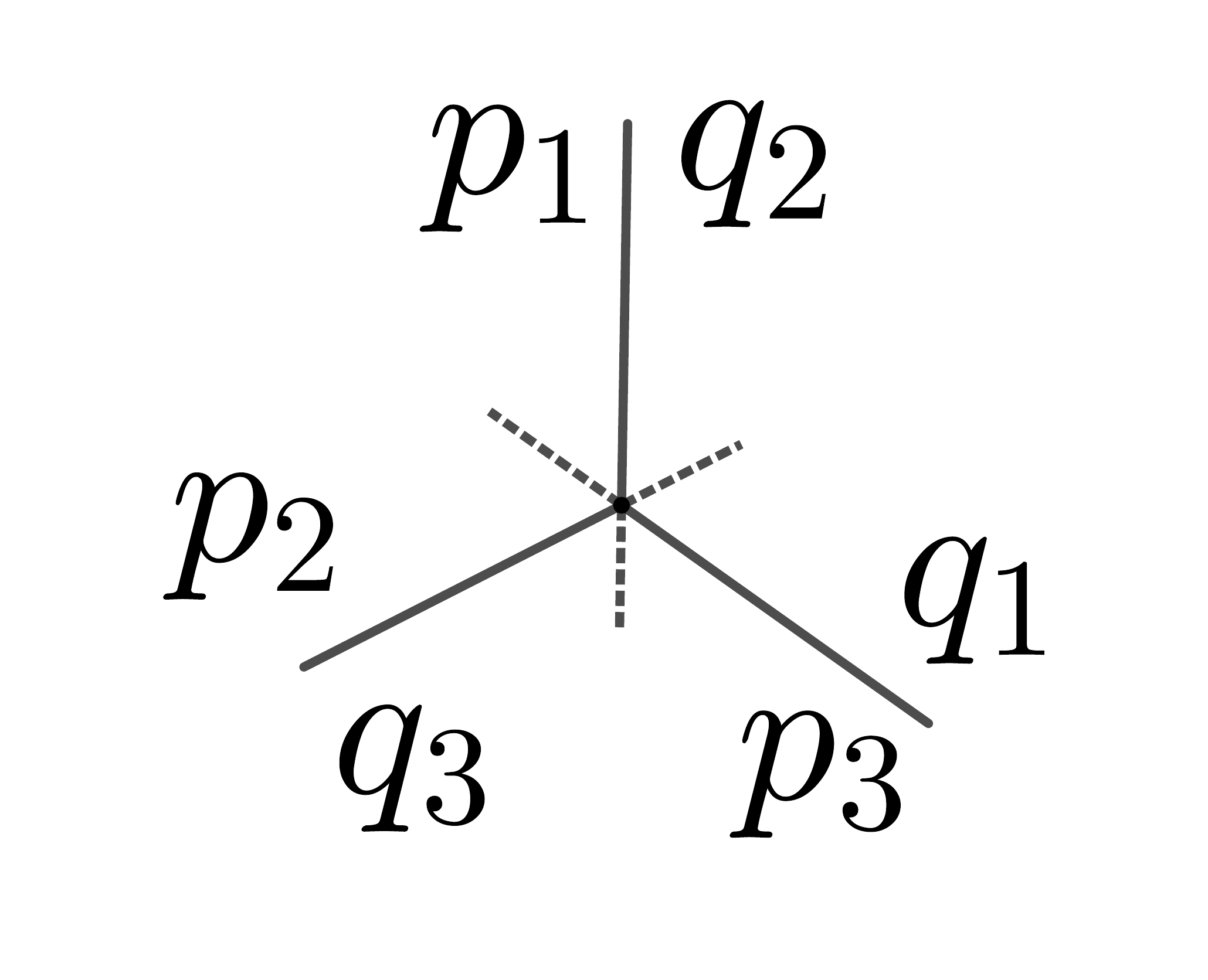}
\end{center}

The construct has $9$ parameters ($8$ coming from the relative positions of nodal cubics and $1$ coming from the position of additional blow up point). The $T_X^1$ bundle is a topologically trivial line bundle on a $\Sing(X)$, which is a rational curve with one triple self-intersection point. Moduli space of such bundles is $(\mathbb{C}^*)^2$, so naively one would expect to be able to force $d$-semistability by deforming the construct.

Calculating the mapping from the pairs of nodal cubics to the moduli of obstruction bundles directly is quite a tedious problem, so we instead describe its behaviour while approaching some codimension $1$ degenerations, and then use topological reasoning to prove the map is surjective. This approach should probably be restated (and greatly generalized) in terms of some compactifications of the relevant moduli spaces, but we did not manage to find out how to do it.

\section{Smoothing}
\subsection{Obstruction mapping}
We now make some preparations. Let us, as before, denote the construct as $X$. We assume that all gluing curves of $X$ have not more than three triple points (and if a gluing curve has less than three triple points we fix an additional marking points which are to be glued to the marking points). Let us denote the moduli space of configurations of $X$ as $M_X$. Under the assumption it automatically splits as the cartesian product: \[M_X = \prod M_{X_i}\]
where $M_{X_i}$ are moduli of configurations on each component.

We denote the singular locus $\Sing(X)$ as $S$ to emphasise that it actually doesn't depend on $X$.

Let us denote as $M_O$ the moduli space of obstruction bundles - i.e. the moduli space of line bundles on $S$ having the correct topological type. There is an obstruction mapping \[O: M_X \rightarrow M_O\] which is defined as follows: \[O(X) = (T_X^1)^{*}\]

In order to guarantee d-semistability it is enough to check that this mapping is surjective (or at least the preimage of the trivial bundle is nonzero).

We will use the following concrete description of $(T_X^1)^{*}$: its restriction on each component $C \in \Sing(X)$ is equivalent to $N^*_{C, X_i} \otimes N^*_{C, X_j} (- \sum_k p_k)$ where $X_i$, $X_j$ are analytic germs of components $C$ resides in (it could be the same component but different branches), and $\sum_k p_k$ is taken over triple points on $C$. The geometric fiber over the triple point $p$ lying on the intersection of curves $C_1$, $C_2$ and $C_3$ is canonically identified with $T^*_p(C_1) \otimes T^*_p(C_2) \otimes T^*_p(C_3)$. The bundles over different components are glued according to this identification.

For an element \[\omega = s \gamma_i \otimes \gamma_j \in N^*_{C, X_i} \otimes N^*_{C, X_j} (- \sum_k p_k)\] such that \[s \in \mathcal{O}(- \sum_k p_k) \]
\[\gamma_i \in N^*_{C, X_i}\]
\[\gamma_j \in N^*_{C, X_j}\]

let us denote by \[v_p (\omega) = ds \otimes \gamma_i \otimes \gamma_j\] the value of $\omega$ in the space $T^*_p(C) \otimes T^*_p(C_i) \otimes T^*_p(C_j)$, where $p \in C$ - triple point, $C_i$, $C_j$ - germs of other curves going through $p$ along $X_i$ and $X_j$ respectively. We will also sometimes denote by $v_p$ a number in a case the space $T^*_p(C) \otimes T^*_p(C_i) \otimes T^*_p(C_j)$ is trivialized.

Suppose now we are given (for all pairs (curve $C$, germ $X_i$ of the surface going through $C$)) a collection of meromorphic conormal forms \[\gamma_{C, X_i} \in N^*_{C, X_i}\] with zeros and poles away from the set of triple points. Let us also fix on each curve $C$ the function $s_C$ with zeros in the triple points and poles away from the set of triple points (say, for three triple points one can put \[s = \frac{x(x-1)}{(x-r)^3}\] where $(0, 1, \infty)$ are identified with triple points on a curve $C$, and $r$ is an arbitrary point).

For any point $p$ with germs of curves $C_1, C_2, C_3$ passing through it we choose the following trivialization:
\[ds_{C_1}|_p \otimes ds_{C_2}|_p \otimes ds_{C_3}|_p \in T^*_p(C_1) \otimes T^*_p(C_2) \otimes T^*_p(C_3)\]

Now, let us define the collection of meromorphic sections of $(T_X^1)^*$ on components.

\[\omega_C = s_c \gamma_{C,X_i} \otimes \gamma_{C, X_j}\]

it is easy to see that

\[v_p (\omega_C) = \frac{\gamma_{C, X_i}}{ds_{C_i}|_p} \frac{\gamma_{C, X_j}}{ds_{C_j}|_p}\]

Now, the line bundle $(T_X^1)^*$ admits the following concrete description in terms of this data: start off with the collection of trivial bundles on the components. Use the collection of identifications $v_p(\omega_C)$ to glue them together (the image of the section $1 \in \Gamma(\mathcal{O}_C)$ in the point $p$ should be $v_p(\omega_C)$). Denote the resulting bundle as $L$. Then,

\[(T_X^1)^* \simeq L([\omega])\]

This will be used in the next part to describe the asymptotic behavior of $(T_X^1)^*$ over various degenerations of the construct. Due to construct being normal crossing yet very not simple, there is quite a tedious case-by-case examination, so we just end the section with one particular case which is quite general and will be used later.

Suppose there is a family of constructs $X^z$ over the base with parameter $z$, such that in the central fiber one of the curves $C$ on one of the components $X_i$ degenerates into nodal rational curve $Q \cup Q'$. Suppose that the limits of the triple points $p_1, p_2$ land on $Q$, and limit of the triple point $p_3$ lands on $Q'$. We will now describe the asymptotic behavior of $(T_X^1)^*$ in this situation. \footnote{in what follows we will denote by the expression $q \sim z^k$ the fact that some quantity $q$ divided by $z^k$ has a finite nonzero limit for $z=0$}.

We denote by $\tilde{C}$ the family of curves $C$ over the disk $D$ with coordinate $z$. $\tilde{C}$ can be identified with $\mathbb{P}^1 \times D$, blown up in the point $p_3$ in the central fiber.

\begin{remark} It is possible to choose the conormal forms $\gamma$ in such a way that they have a well-defined limit for $z=0$, and their divisors are still away from the triple points. The conormal form $\gamma_{C, X_i}$ will split into conormal forms $\gamma_Q$ and $\gamma_{Q'}$, which will also both have a zero in the point of intersection $Q \cap Q'$.
\end{remark}

\begin{proof} It follows from the fact that the pullback of $N^*(C)$ is a line bundle over $\tilde{C}$, and the fact that $N^*(C_0) \rightarrow N^*{Q} \oplus N^*{Q'}$ has a zero of order $1$ in the node.
\end{proof}

Let us fix some (smooth riemannian) metric $g$ on $X^z$. We will use it to discuss asymptotic behavior of divisor $[\omega]$. Note that the fibers of $\tilde{C}$ are immersed in $X^z$ near the triple points, so the any smooth riemannian metric on $\tilde{C}$ is fiberwise continuous in $g$ and bilipschitz near the triple points, so asymptotic behavior can also be discussed in terms of internal geometry of $\tilde{C}$.

We will call the ''internal'' parametrisation of $C$ the parametrisation given by a triple $(p_1, p_2, p_3)$, and the corresponding Fubini-Study metric ''internal'', and the metric $g$ restricted on $C$ will be called ''external''. Recall that the function $s$ is defined internally.

\begin{lemma} The behavior of divisor $[\gamma_{C, X}]$ on $\tilde{C}$ under the $z \rightarrow 0$ is the following: it splits into two divisors $q$ and $q'$, landing on the components $Q$ and $Q'$ respectively. It means that in the internal parametrisation $q$ has a finite limit, and the distance between points of $q'$ and $p_3$ has asymptotic $\sim z$.
\end{lemma}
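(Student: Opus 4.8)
The plan is to carry out the limit on the semistable model $\tilde C=\mathrm{Bl}_{p_3}(\mathbb P^1\times D)$ already introduced, extending $\gamma=\gamma_{C,X}$ to a meromorphic section of the relative conormal bundle over $\tilde C$ by the preceding Note, and to read the limiting divisor off its restriction to the two components of the central fibre, pulling everything back to the internal parametrisation through the blow-down. First I would fix a coordinate $\xi$ on the internal $\mathbb P^1$ that vanishes at the centre of the blow-up, so that near the central fibre $\tilde C$ carries a chart $(\xi,\eta)$ with $z=\xi\eta$, in which $Q$ (the strict transform, through $p_1,p_2$) is $\{\eta=0\}$ with coordinate $\xi$, $Q'$ (the exceptional curve, through $p_3$) is $\{\xi=0\}$ with coordinate $\eta$, the node is the origin, the fibre $C^z$ is $\{\xi\eta=z\}$, and the blow-down is $(\xi,\eta)\mapsto(\xi,\xi\eta)$; thus the internal coordinate of a point of $C^z$ is just its $\xi$, and its internal distance to $p_3$ is comparable to $|\xi|$. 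By the Note I may choose $\gamma$ so that it extends to a meromorphic section $\tilde\gamma$ of $\mathcal L:=N^*_{\tilde C/(X_i\times D)}$ over $\tilde C$ whose restriction $\gamma_0=\tilde\gamma|_{C_0}$ to $C_0=Q\cup Q'$ is nonzero on each component, with $\mathrm{div}(\gamma_0|_Q)$ avoiding $p_1,p_2$ and $\mathrm{div}(\gamma_0|_{Q'})$ avoiding $p_3$.

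Now $[\gamma|_{C^z}]$ is the trace on the fibre $C^z$ of the fixed divisor $\mathrm{div}(\tilde\gamma)\subset\tilde C$, which (having no fibre component) is flat over $D$ of the constant degree $d=\deg N^*_{C^z/X_i}$. By the argument principle on small holomorphic discs transverse to the central fibre, the $d$ points of $[\gamma|_{C^z}]$ converge, with multiplicity, to the $d$ points of $\mathrm{div}(\tilde\gamma)\cap C_0=[\gamma_0|_Q]+[\gamma_0|_{Q'}]$; let $q$ collect those landing on $Q$ and $q'$ those landing on $Q'$. A point of $q$ converging to $\xi=\xi_0$ then has internal coordinate tending to $\xi_0$, a well-defined limit on $Q$ bounded away from the triple points, which is the first assertion. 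A point of $q'$ converging to $\eta=\eta_0$ on $Q'$ (with $\eta_0\neq\infty$, the value at $p_3$, which is excluded) lies on $\{\xi\eta=z\}$ with $\eta\to\eta_0$, so $\xi=z/\eta\sim z/\eta_0$, whence its internal coordinate, and by the first paragraph its internal distance to $p_3$, is $\sim z$, which is the second assertion.

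The hard part is ruling out zeros escaping to the node. The node projects to the internal position of $p_3$ under the blow-down, and on the fibre $\{\xi\eta=z\}$ a point approaching the node has $|\xi|\sim|z|^{1/2}$, not $|z|$; a zero of $\gamma|_{C^z}$ doing this would both spoil the stated asymptotics and push $[\gamma|_{C^z}]$ towards a triple point. So I must check that $\gamma_0$ is nonvanishing at the node as a section of $\mathcal L$. In the chart above $\mathcal L$ is locally generated by the class of $\xi\eta-z$, which restricts to a nowhere-zero section of $\mathcal L|_{C_0}$ near the node, so $\gamma_0$ is nonzero there as soon as the scalar function multiplying it is, which is part of the generic choice from the Note. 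The one genuinely delicate point is to see that this does not contradict the Note's assertion that $\gamma_0|_Q$ and $\gamma_0|_{Q'}$ ``have a zero at the node'': that statement is about the conormal bundle $N^*_{Q/X_i}$ of $Q$ in the fixed surface $X_i$, and the natural comparison map $\mathcal L|_Q\to N^*_{Q/X_i}$ vanishes to order one at the node (equivalently $\mathcal L|_Q\cong N^*_{Q/X_i}(-\mathrm{node})$, so that $\deg\mathcal L|_Q+\deg\mathcal L|_{Q'}=d$ as it must), and likewise on $Q'$; hence the ``node zero'' is exactly this twist, and $\gamma_0|_Q$ itself, as a section of the bundle $\mathcal L|_Q$ that actually governs the limiting divisor, is regular and nonzero at the node. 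Everything else is the argument principle and the explicit chart.
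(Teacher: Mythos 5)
Your proof is correct and follows essentially the same route as the paper's: the paper's one\-/line argument also works on the blow-up model $\tilde C$, observing that the part $q$ of the divisor meets the fibres transversely (hence has a finite internal limit) while $q'$ meets the exceptional curve transversely away from $Q\cap Q'$, forcing the $\sim z$ rate. Your extra paragraph ruling out zeros escaping to the node --- by distinguishing the bundle $N^*_{\tilde C}|_{Q}\cong N^*_{Q}(-\mathrm{node})$ that actually governs the limiting divisor from $N^*_{Q}$ itself --- is a genuine and welcome tightening of the step the paper leaves implicit in the phrase ``away from $Q\cap Q'$''.
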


\begin{proof}
It is readily seen from the blow-up model for $\tilde{C}$ - before the blow-up the fibers are parametrized internally, and $q$ intersects the fiber transversely, while $q'$ intersects the exceptional divisor transversely away from $Q \cap Q'$ hence, tends to $p_3$ with asymptotic $\sim z$.
\end{proof}

We now proceed to regularize $\gamma_{C, X}$: choose the function $f$ in such a way that it has zeroes in $q'$ and poles in some divisor having a limit away from triple points. Then, we define

\[\gamma_{C, X}^{\reg} = \frac{\gamma_{C,X}}{f}\]

\begin{lemma}
\[|\gamma_{C,X}^{\reg}|_{p_1} \sim |\gamma_{C,X}^{\reg}|_{p_2} \sim 1\]
\[|\gamma_{C,X}^{\reg}|_{p_3} \sim z^{-ord(q')} = z^{\langle Q', Q' \rangle + 1}\]
\end{lemma}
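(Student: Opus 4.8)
The plan is to treat separately the two marking points lying on $Q$ and the one lying on $Q'$; the former is essentially formal, while the latter is where the blow‑up model for $\tilde C$ and the two preceding lemmas do the real work. For $p_1$ and $p_2$: both lie on the component $Q$ and, for $z$ small, stay away from $Q'$, from the node $Q\cap Q'$, and from all zeros and poles of $\gamma_{C,X}$ and of $f$ — the zeros of $f$ run along $q'$, which by the previous lemma clusters near $p_3$, and the remaining poles of $f$ may be chosen with limits away from $p_1,p_2$ as well. Hence near $p_1$ and $p_2$ both $\gamma_{C,X}$ and $f$ extend across $z=0$ to holomorphic, nowhere‑vanishing objects with finite nonzero limits, and so does $\gamma_{C,X}^{\reg}=\gamma_{C,X}/f$; measuring in the fixed metric $g$ gives $|\gamma_{C,X}^{\reg}|_{p_1}\sim|\gamma_{C,X}^{\reg}|_{p_2}\sim 1$.

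For $p_3$ I would work on $\tilde C=\mathbb P^1\times D$ blown up at $(p_3,0)$, with central fibre $Q\cup Q'$ and $Q'$ the $(-1)$‑exceptional curve, and let $\tilde p_3\in Q'$ be the point through which the section $z\mapsto p_3^z$ passes, distinct from the node. By the previous lemma every point of $q'$ tends to $p_3$ at the common rate $\sim z$ in an internal coordinate $v$ centred at $p_3$, so in the blow‑up model these points land at finite nonzero parameter along $Q'$, away from both $\tilde p_3$ and the node; since divisors of $\gamma_{C,X}$ avoid the triple points, $\gamma_{C,X}$ is then a local generator of $N^{*}(C)$ (as a line bundle on $\tilde C$) near $\tilde p_3$. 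The function $f$, on the other hand, has its zeros and poles along $q'$ matching those of $\gamma_{C,X}$; near $p_3$ it is, up to a unit, $\prod_i (v-a_i(z))^{\pm 1}$ with $a_i(z)\sim z$, and each factor $v-a_i(z)=v-c_iz-O(z^2)$ pulls back to a smooth branch through the blow‑up centre, hence acquires the exceptional curve $Q'$ with multiplicity $\pm 1$, while the remaining poles of $f$ contribute nothing along $Q'$. Therefore $\ord_{Q'}(f)=\ord(q')$, i.e. $f=z^{\ord(q')}\cdot(\text{unit})$ near $\tilde p_3$; dividing, $\gamma_{C,X}^{\reg}=z^{-\ord(q')}\cdot(\text{unit})$, and evaluating along the section $p_3^z$ gives $|\gamma_{C,X}^{\reg}|_{p_3}\sim z^{-\ord(q')}$.

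It then remains to identify $\ord(q')$. By the previous lemma $q'$ is the part of the divisor of $\gamma_{C,X}$ lying on $Q'$, and since $\gamma_{C,X}$ does not vanish identically there this is the divisor of the restriction $\gamma_{C,X}|_{Q'}$, a section of $N^{*}(C)|_{Q'}$, so $\ord(q')=\deg N^{*}(C)|_{Q'}$. Because $\tilde C$ is smooth and $N^{*}(C)$ is the conormal line bundle of the family of curves, its restriction to the central fibre is the conormal bundle of the nodal curve $C_0=Q\cup Q'$ inside $X_i$; hence $N^{*}(C)|_{Q'}=\mathcal O_{X_i}(-C_0)|_{Q'}$ and $\deg N^{*}(C)|_{Q'}=-(Q+Q')\cdot Q'=-\langle Q',Q'\rangle-1$, using that $Q$ and $Q'$ cross transversally at the single node. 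This yields $\ord(q')=-\langle Q',Q'\rangle-1$ and $z^{-\ord(q')}=z^{\langle Q',Q'\rangle+1}$, as claimed.

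The hard part is the $p_3$ step, specifically turning the ``$\cdot(\text{unit})$'' bookkeeping into something rigorous: one must know that the factors coming from the chosen $\gamma_{C,X}$, from its limit, and from the extra poles of $f$ are all bounded away from $0$ and $\infty$ uniformly as $z\to 0$, and this is exactly what is bought by the previous lemma's assertion that every point of $q'$ approaches $p_3$ at the same rate $\sim z$. A point of $q'$ approaching faster or slower than $z$ would, in the blow‑up model, land at $\tilde p_3$ or at the node and destroy the single clean exponent $\ord(q')$; pinning down this uniform rate is precisely the delicate place where, as flagged in the introduction, the argument relies on an ad hoc analysis of the degeneration rather than on a clean moduli‑theoretic input.
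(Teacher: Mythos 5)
Your argument is correct and follows essentially the same route as the paper's (much terser) proof: $\gamma_{C,X}$ has finite nonzero limit norm at the triple points, $f\sim 1$ at $p_1,p_2$ since its zeros and poles stay away from them, and $f\sim z^{\ord(q')}$ at $p_3$ because the $\ord(q')$ zeros of $f$ each approach $p_3$ at rate $\sim z$. The only addition is your explicit derivation of $\ord(q')=-\langle Q',Q'\rangle-1$ via $N^{*}(C)|_{Q'}=\mathcal{O}(-Q-Q')|_{Q'}$, which the paper leaves implicit in the preceding remark about $\gamma$ splitting into $\gamma_Q$ and $\gamma_{Q'}$ with a first-order zero at the node.
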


\begin{proof}
The conormal forms $\gamma$ have a finite nonzero limit norm in the triple points. The first statement follows from the fact that $f$ is $\sim 1$ in the points $p_1$ and $p_2$, and the second statement is $f \sim z^{ord(q')}$, which is also clear.
\end{proof}

\begin{lemma}
The norm of $ds$ undergoes the following changes: on every curve except $C$ it has a finite nonzero $g$-norm, and on $C$

\[|ds_C|_{p_1} \sim |ds_C|_{p_2} \sim 1\]
\[|ds_C|_{p_3} \sim z\]
\end{lemma}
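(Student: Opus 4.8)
The plan is to track the function $s = s_C$ through the degeneration, using the two pieces of data established in the previous lemmas: the behaviour of the triple points $p_1, p_2, p_3$ in the internal parametrisation, and the fact that the internal metric and the external metric $g$ are fiberwise comparable near the triple points (hence asymptotics of $|ds|$ may be measured in whichever one is convenient). The only curve on which anything happens is $C$ itself, since every other gluing curve and every other triple point is uniformly bounded away from the degeneration locus; so on all curves $\neq C$ the form $ds$ has a finite nonzero $g$-norm simply because $s$, its divisor, and the metric all have finite nonzero limits there. This disposes of the first assertion and reduces everything to analysing $|ds_C|$ at the three triple points of $C$.

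Next I would compute the internal asymptotics. Recall $s$ is defined internally: in the internal coordinate $x$ with $(p_1, p_2, p_3) = (0, 1, \infty)$ and an auxiliary point $r$, one has $s = x(x-1)/(x-r)^3$, so $s$ has simple zeros at $p_1, p_2$ and a triple pole at $p_3$. By the previous lemma, in the internal parametrisation the points $p_1, p_2$ (together with the whole component $Q$) have finite limits, while the node $Q \cap Q'$ and the point $p_3$ collide: $\dist(p_3, Q \cap Q') \sim z$ internally — more precisely $\tilde{C}$ is $\mathbb{P}^1 \times D$ blown up at $p_3$ in the central fiber, so in the internal chart the relevant local coordinate on $Q'$ near $p_3$ scales like $z$. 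Therefore, measured in the internal Fubini–Study metric, $ds$ has finite nonzero norm at $p_1$ and $p_2$ (nothing degenerates there), whereas at $p_3$ the triple pole of $s$ sitting at a point that is being blown up forces the norm of $ds$, read off on the exceptional/limit curve $Q'$ in the internal parametrisation, to blow up. The clean way to see the exact power is: pick the internal coordinate $w$ vanishing at $p_3$; then $s \sim w^{-3}$, $ds \sim w^{-4}\, dw$, and along the family $w \sim z$ on $Q'$ while $dw$ on the limit fiber $Q'$ is the pullback of $dz \cdot(\text{unit})$ — tracking this through the blow-up gives the stated $|ds_C|_{p_3} \sim z$.

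Finally I would convert from internal to external norms. At $p_1$ and $p_2$ the internal and external metrics are bilipschitz with constants having finite nonzero limits (the fibers of $\tilde C$ are immersed in $X^z$ near these triple points by the remark preceding the lemmas, and $g$ is a fixed smooth metric), so $|ds_C|_{p_1} \sim |ds_C|_{p_2} \sim 1$ in either metric. At $p_3$ the same bilipschitz comparison holds — the fiber of $\tilde C$ is still immersed near $p_3$ and $g$ is smooth on the total space — so the internal estimate $\sim z$ transfers verbatim to the $g$-norm. Assembling the three points gives the displayed asymptotics.

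I expect the main obstacle to be the bookkeeping at $p_3$: one has to be careful that ``$\dist(p_3, q') \sim z$'' and the scaling of the coordinate in which $s$ is expressed are compatible, and that passing to the blow-up model $\tilde C$ does not secretly shift the power of $z$ (e.g. the distinction between the asymptotics of the point and of the differential $dw$, and the fact that $s$ is defined by the internal cross-ratio data rather than adapted to the family). Everything else — the finiteness away from $C$, the comparison of metrics, the count of the order of pole — is routine once this scaling is pinned down.
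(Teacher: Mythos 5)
Your treatment of the curves other than $C$ and of the points $p_1,p_2$ matches the paper's (bilipschitz comparison of internal and external metrics where nothing degenerates), but your analysis at $p_3$ rests on a misreading of $s$ and, as set up, would give the wrong power of $z$. The function $s_C$ is chosen to have \emph{zeros at the triple points} and its poles \emph{away} from them: for $s=\frac{x(x-1)}{(x-r)^3}$ with triple points at $0,1,\infty$, the point $\infty$ carries a simple zero (numerator degree $2$ versus denominator degree $3$), and the triple pole sits at the auxiliary point $r$, which stays bounded away from everything. So $s$ has a simple zero at $p_3$, not a triple pole. Your computation ``$s\sim w^{-3}$, $ds\sim w^{-4}\,dw$, $w\sim z$'' would, carried through the blow-up honestly ($dw=z\,d\tilde w$), give $|ds|_{p_3}\sim z^{-3}$, not $z$; note also that you assert the norm ``blows up'' at $p_3$ while the statement to be proved is $|ds_C|_{p_3}\sim z\to 0$, which signals that the bookkeeping was never actually closed.

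The correct local argument is the opposite of yours and is exactly the delicate point you flagged at the end. Put $p_3$ at $x=0$ in the internal coordinate, so $s(x,z)=\frac{x(x-1)}{(x-r)^3}$ has a simple zero there and $\frac{\partial s}{\partial x}$ is finite and nonzero at $x=0$. The blow-up replaces $x$ by $\tilde x = x/z$, and it is $\tilde x$ (not $x$) that is bilipschitz to the external parameter on $Q'$; the chain rule then gives $\frac{ds}{d\tilde x}\big|_{p_3} = z\cdot\frac{\partial s}{\partial x}\big|_{0}\sim z$. In other words, the factor of $z$ comes purely from the rescaling of the coordinate in the blow-up chart applied to a function with a simple zero, not from any pole of $s$. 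You should redo the $p_3$ estimate along these lines; the rest of your argument stands.
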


\begin{proof}
Recall that internal metric on $C$ is bilipschitz to the external metric on $Q$. This proves the first statement. For the second one, consider the function $s(x, z) = \frac{x(x-1)}{(x-r)^3}$ and blow up a point $(0, 0)$ (here we put $p_3 = 0$). The resulting function in the coordinates $(\tilde{x}, z) = (\frac{x}{z}, z)$ looks like \[\frac{z \tilde{x} (z \tilde{x} - 1)}{(z \tilde{x} - r)^3}\]

Recall that $\tilde{x}$ is bilipschitz to the external parameter on $Q'$. It is readily seen that \[\frac{ds}{d\tilde{x}}|_{p_3} \sim z\]
\end{proof}

Denote by \[\omega^{\reg} = \frac{\omega}{f}\] the collection of sections formed from $\gamma$ by the same rule as $\omega$ but with $\gamma_{C, X}$ changed to $\gamma_{C, X}^{\reg}$. Combining the statements above  we get the following

\begin{proposition}\label{bubble}
\[v_{p_3}(\omega_C^{\reg}) \sim z^{\langle Q', Q' \rangle + 1} \]
\[v_{p_3}(\omega_{C_{X}}) \sim z^{-1}\]

and other trivializations are $\sim 1$.
\end{proposition}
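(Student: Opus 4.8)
The plan is to read the proposition off the three preceding norm lemmas, substituting their asymptotics into the explicit product formula
\[v_{p}(\omega_{C}) \;=\; \frac{\gamma_{C,X_i}}{ds_{C_i}|_{p}}\cdot\frac{\gamma_{C,X_j}}{ds_{C_j}|_{p}}\]
recorded above, together with its regularised analogue, in which $\gamma_{C,X}$ is everywhere replaced by $\gamma^{\reg}_{C,X}=\gamma_{C,X}/f$ while every other datum — the two functions $ds_{C_i},ds_{C_j}$ attached to the other double curves through $p$, and the conormal form $\gamma_{C,X_j}$ on the non-degenerating germ — is kept unchanged.

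First I would separate the inputs of these products into those that remain bounded away from $0$ and $\infty$ as $z\to0$ and those that do not. By the Remark on the limiting conormal forms together with the bilipschitz comparison of the internal and external metrics, every conormal form $\gamma_{D,\cdot}$ attached to a double curve $D$ that does not bubble, and every function $ds_{D}$ with $D\ne C$, has a finite nonzero $g$-norm in the limit at each triple point on it; and $\gamma_{C,X_j}$ is $\sim 1$ at $p_1,p_2,p_3$ as well, since at $p_3$ it is evaluated at a point of $Q'$ away from the node $Q\cap Q'$, where its limit $\gamma_{Q'}$ is regular and nonzero. Hence the only $z$-dependent factors that can enter any $v_{p}(\,\cdot\,)$ are $\gamma^{\reg}_{C,X}$, for which $|\gamma^{\reg}_{C,X}|_{p_1}\sim|\gamma^{\reg}_{C,X}|_{p_2}\sim 1$ and $|\gamma^{\reg}_{C,X}|_{p_3}\sim z^{-\ord(q')}=z^{\langle Q',Q'\rangle+1}$, and $ds_{C}$, for which $|ds_{C}|_{p_1}\sim|ds_{C}|_{p_2}\sim 1$ and $|ds_{C}|_{p_3}\sim z$. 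In particular every trivialisation at $p_1$ or at $p_2$ is a product of quantities $\sim 1$, hence $\sim 1$, which disposes of the last assertion away from $p_3$.

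At $p_3$ the computation is then immediate. In $v_{p_3}(\omega^{\reg}_C)$ the two denominator factors belong to the non-degenerating double curves through $p_3$ and are $\sim 1$, the conormal form on the non-degenerating germ is $\sim 1$, and the single occurrence of $\gamma^{\reg}_{C,X}$ contributes $z^{\langle Q',Q'\rangle+1}$; multiplying yields $v_{p_3}(\omega^{\reg}_C)\sim z^{\langle Q',Q'\rangle+1}$. In $v_{p_3}(\omega_{C_X})$ — the trivialisation attached to the other double curve $C_X$ through $p_3$ lying on the degenerating component, which is not regularised — every conormal form is $\sim 1$, but exactly one of the two denominator factors is $ds_{C}|_{p_3}\sim z$, so $v_{p_3}(\omega_{C_X})\sim z^{-1}$. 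The remaining trivialisations at $p_3$ have to be shown, by the same analysis, to contain neither $\gamma^{\reg}_{C,X}|_{p_3}$ nor $ds_{C}|_{p_3}$, so that they too are $\sim 1$.

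The hard part — really the only part that requires care, given how far the construct is from simple normal crossing — is keeping this last bookkeeping straight. At each triple point one must correctly identify which local branch of $\Sing(X)$ serves as $C_i$ and which as $C_j$, verify that $\gamma^{\reg}_{C,X}$ is regularised on exactly one of the two germs meeting along $C$, and confirm that the degenerate factor $ds_{C}|_{p_3}\sim z$ occurs in exactly the trivialisation labelled $\omega_{C_X}$ and in none of the others. This is precisely where the cyclic orientation of the edges around the triple point — the Claim identifying the dual complex with the duncehat — is used; once that branch matching is fixed, everything else is the direct substitution carried out above.
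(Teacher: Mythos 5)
Your proposal is correct and follows essentially the same route as the paper, which offers no separate argument for this proposition beyond ``combining the statements above'': one substitutes the asymptotics of $\gamma^{\reg}_{C,X}$ and $ds_C$ from the three preceding lemmas into the product formula for $v_p$, observes that all other factors have finite nonzero limits, and reads off the exponents. You also correctly isolate the only nontrivial content --- deciding, from the branch combinatorics at the triple point, exactly which trivializations pick up the factors $\gamma^{\reg}_{C,X}|_{p_3}$ and $ds_C|_{p_3}$ --- which is precisely the bookkeeping the paper defers to its subsequent case-by-case computations.
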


where $C_{X}$ denotes other curve going through $p_3$ on $X$.

The similar calculations can be performed for other simple degenerations of the construct.

\subsection{Asymptotic of $M_O$}

The additional blow-up point in the construction gives a very controllable way to change the $T_X^1$, so it makes sense to do the blow-up last. So, let us consider the space $M$ of pairs of nodal cubics in $\mathbb{P}^2$, blown up in $8$ of $9$ of their points of intersection and chosen branches in the nodes, and the space $M_O$ of line bundles on $\mathbb{P}^1 / (0, 1, \infty)$ of Chern number $-1$.

The space $M_O$ can be identified with $(\mathbb{C}^*)^3 / \mathbb{C}^*$, where the factors of the product are the fibers over $0, 1, \infty$. We put $M_O$ in a standard $\mathbb{P}^2 = \bar{M}_O$ in an obvious way.

\begin{proposition} \label{blowup}
Then, there is a quadric $Z \in \bar{M}_O$ of such bundles $L$ that there exists a unique point $x$ such that $L(x) \simeq \mathcal{O}$, passing through the coordinate points.
\end{proposition}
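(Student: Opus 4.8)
The plan is to identify $Z$ with the closure of the image of the Abel--Jacobi type morphism
\[\mu\colon S_{\mathrm{sm}}\longrightarrow M_O,\qquad x\longmapsto \mathcal{O}_S(-x),\]
where $S=\mathbb{P}^1/(0,1,\infty)$ and $S_{\mathrm{sm}}=\mathbb{P}^1\setminus\{0,1,\infty\}$ is its smooth locus (an open subset of the normalization). Indeed, for a smooth point $x$ one has $L(x)\simeq\mathcal{O}_S$ precisely when $L\simeq\mathcal{O}_S(-x)$, so the bundles $L$ admitting such an $x$ are exactly those in $\operatorname{im}\mu$, and for all of them the point $x$ is unique once $\mu$ is injective. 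Hence the first step is injectivity of $\mu$: if $\mathcal{O}_S(-x)\simeq\mathcal{O}_S(-y)$ for distinct smooth points, then $\mathcal{O}_S(y-x)$ is trivial, which produces a rational function on $\mathbb{P}^1$ of degree at most one with a simple zero and a simple pole at the two points and with equal values at $0,1,\infty$; but a M\"obius transformation taking the same value at three distinct points is constant, a contradiction.

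The second step is an explicit description of $\mu$ in the given homogeneous coordinates on $\bar M_O=\mathbb{P}^2$. The identification $M_O\cong(\mathbb{C}^*)^3/\mathbb{C}^*$ is fixed by choosing a reference degree $-1$ line bundle on $\mathbb{P}^1$ together with trivializations over $0,1,\infty$. The pullback of $\mathcal{O}_S(-x)$ to $\mathbb{P}^1$ is the ideal sheaf of $x$; since $x$ avoids $0,1,\infty$, the constant section $1$ trivializes this ideal sheaf near those three points and carries the descent data inherited from $\mathcal{O}_S$, so in this trivialization the three gluing parameters are equal. Transporting to the fixed reference trivialization multiplies them by the values at $0,1,\infty$ of a fixed rational function having a simple zero at $x$ and a fixed simple pole, which yields
\[\mu(x)=\bigl[\,\alpha\,(1-x)\;:\;\beta\,x\;:\;x\,(1-x)\,\bigr]\]
for nonzero constants $\alpha,\beta$ depending only on the chosen identification. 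This descent-data bookkeeping is the one place where care is genuinely needed; an error in $\alpha,\beta$ would merely rescale the three homogeneous coordinates by a diagonal torus element and would not affect anything below.

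It remains to read off the two assertions. The three entries of $\mu$ have no common zero on $\mathbb{P}^1$, so $\mu$ extends to a base-point-free morphism $\mathbb{P}^1\to\mathbb{P}^2$ of degree two, and its image $Z$ is therefore a quadric; since $\mu$ is injective, $Z$ is moreover a smooth conic, because a degree-two morphism whose image were a line would be two-to-one. Eliminating the parameter gives an equation of the form $uv=\gamma\,uw+\delta\,vw$ with $\gamma,\delta\neq 0$, which visibly vanishes at each of $[1:0:0]$, $[0:1:0]$, $[0:0:1]$; equivalently, letting $x\to 0,1,\infty$ in the parametrization one finds $\mu(x)$ tending to these three points, which are thus exactly $Z\setminus\operatorname{im}\mu$ and lie on the toric boundary of $M_O$ in $\bar M_O$. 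This proves that $Z$ is a quadric through the coordinate points. The only real obstacle is the descent computation of the second step; once that formula is secured the rest is formal, and the two qualitative conclusions — degree two and incidence with the coordinate points — are robust under the normalization ambiguities there. A more intrinsic argument would invoke the compactified Jacobian of the arithmetic-genus-$2$ curve $S$ and the fact that an Abel--Jacobi image has degree $p_a(S)=2$ against the natural polarization, but matching that polarization with $\mathcal{O}_{\mathbb{P}^2}(1)$ requires more setup than the direct calculation.
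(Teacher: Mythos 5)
Your overall strategy (parametrize $Z$ as the closure of the Abel--Jacobi image $x\mapsto\mathcal{O}_S(-x)$, prove injectivity, and read off degree $2$ and incidence from an explicit formula) is legitimate and genuinely different from the paper's argument, which never parametrizes $Z$: the paper identifies a degree $-1$ bundle with descent data, via the tautological model of $\mathcal{O}(-1)$, with a triple of points $v_0\in\ell_0$, $v_1\in\ell_1$, $v_\infty\in\ell_\infty$ on the three tautological lines, and observes that $L(x)\simeq\mathcal{O}_S$ for a (then automatically unique) $x$ iff the three points are collinear; the collinearity determinant $\sum_{i\neq j}\lambda_i\lambda_j\det(e_i,e_j)$ is visibly a conic through the coordinate points. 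That route gets degree and incidence for free, with no transcendental bookkeeping.

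However, your execution has a genuine gap exactly at the step you flag as delicate, and your claim that an error there ``would merely rescale the three homogeneous coordinates by a diagonal torus element'' is false. The identification $M_O\cong(\mathbb{C}^*)^3/\mathbb{C}^*$ carries an inversion ambiguity (descent data recorded as maps \emph{into} the fiber over the triple point versus \emph{out of} it, equivalently the ``triple of points'' convention versus its reciprocal), and this ambiguity acts by the Cremona involution $[a:b:c]\mapsto[bc:ca:ab]$, not by the torus: it exchanges ``conic through the three coordinate points'' with ``line missing them,'' so it changes the truth of the statement. Concretely, the procedure you describe --- gluing parameters $(1,1,1)$ in the constant-section trivialization of $I_x$, multiplied by the values at $0,1,\infty$ of a function $g$ with a simple zero at $x$ and a fixed simple pole $p_0$ --- yields $\bigl[\,c_0x : c_1(1-x) : c_\infty\,\bigr]$, a \emph{linear} parametrization whose image is a line not through the coordinate points; your displayed formula $[\alpha(1-x):\beta x:x(1-x)]$ is precisely the coordinatewise reciprocal of this. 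So the formula you use is the correct one for the paper's convention, but it does not follow from the derivation you give, and nothing in your argument fixes which of the two mutually exclusive conventions is in force. To close the gap you must pin the convention down (e.g.\ via the tautological model: $\mathcal{O}_S(-x)$ corresponds to the three points where the affine line $\{h_x=1\}$, $\ker h_x=\ell_x$, meets $\ell_0,\ell_1,\ell_\infty$, whence the coordinates $1/h_x(e_i)$ and the quadratic formula) and check it is the same convention used later when the asymptotics of $O$ are compared with $Z$.
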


\begin{proof}
Let us choose the trivialization over the triple point (it amounts to the choice of the preimage in $(\mathbb{C}^*)^3$). Interpret $\mathcal{O}(-1)$ as a tautological bundle, then $L$ with chosen trivialization corresponds to a triple of points on a plane on lines corresponding to $0$, $1$ and $\infty$. The condition of existence of $x$ is the condition that these $3$ points lie on the same affine line, and it is obviously a quadric passing through coordinate points.
\end{proof}

The plan now is to prove that the image of the map $O: M \rightarrow M_O$ intersects $Z$. It will mean that there exists such a point that its blow-up will trivialize $T_X^1$. To prove this, we will consider a generic one-parameter family of pairs of nodal cubics (say, the second cubic will not move at all and first will deform in a generic way). This family will exhibit some degenerations, over which we carefully calculate the asymptotic behavior of $O$.

There are few cases that are to be considered.

\begin{itemize}
  \item \textbf{Case 0.} Nodal cubics are tangent to each other in a smooth point (not a chosen point of intersection). This is not even a degeneration - two successive blow-ups remove this intersection, so the construct is well defined for this configuration.
  \item \textbf{Case 1.} First of the nodal cubics degenerates into a line and a quadric. This has $4$ subcases, corresponding to whether $p_2$ and $p_3$ end up on the same component or $p_1$ and $p_3$, (\textbf{cases 1.1 and 1.2}) and, also, whether this component is a line or a quadric (\textbf{cases 1L and 1Q}).
  \item \textbf{Case 2.} First of the nodal cubics exhibits a cusp.
  \item \textbf{Case 3.} One of the cubics passes through a node of the other. This case has $3$ subcases (technically $6$ but interchanging $P$ and $Q$ removes $3$ of them). Either a node of a cubic lands on other cubic away from the chosen intersection point $3$ (\textbf{case 3.0}) or $p_1$ tends to $p_3$, \textbf{case 3.1} or $p_2$ tends to $p_3$ \textbf{case 3.2}.
\end{itemize}

\textbf{Case 0.} No degeneration.

\textbf{Case 1.} Proposition \ref{bubble} is directly applicable. The subcases are: either $p_2$ and $p_3$ land on a same component or $p_1$ and $p_3$ and whether this component is a line or a quadric. Let us fix the following notation. The second cubic will be called $Q$, and the components of the first cubic will be called $P_1$ and $P_2$. The component containing $p_3$ and another point will be called $P_1$, and another one will be called $P_2$. Now, whether $P_1$ is a line or a quadric, $\langle P_1, P_1 \rangle = -1$: for a line, it will be blown up in $2$ points (intersections with $Q$ which are not $p_3$), and quadric will be blown up in $5$ points (for the same reason). $\langle P_2, P_2 \rangle = -2$. So, the second case distinction doesn't matter - the proposition \ref{bubble} will be applied the same. The first distinction also doesn't matter: for example, in case 1.1

\[
v_1(\omega^{\reg}) = \frac{\gamma_P^{\reg}(p_1)}{ds_P(p_2)}\frac{\gamma_Q^{\reg}(q_2)}{ds_Q(q_1)} \sim \frac{z^{-1}}{1}\frac{1}{1} = z^{-1}
\]

\[
v_2(\omega^{\reg}) = \frac{\gamma_P^{\reg}(p_2)}{ds_P(p_1)}\frac{\gamma_Q^{\reg}(q_3)}{ds_P(p_3)} \sim \frac{1}{z} \frac{1}{1} = z^{-1}
\]

\[
v_3(\omega^{\reg}) = \frac{\gamma_P^{\reg}(p_3)}{ds_Q(q_3)}\frac{\gamma_Q^{\reg}(q_1)}{ds_Q(q_2)} \sim \frac{1}{1}\frac{1}{1} = 1
\]

and in the case 1.2

\[
v_1(\omega^{\reg}) = \frac{\gamma_P^{\reg}(p_1)}{ds_P(p_2)}\frac{\gamma_Q^{\reg}(q_2)}{ds_Q(q_1)} \sim \frac{1}{z}\frac{1}{1} = z^{-1}
\]

\[
v_2(\omega^{\reg}) = \frac{\gamma_P^{\reg}(p_2)}{ds_P(p_1)}\frac{\gamma_Q^{\reg}(q_3)}{ds_P(p_3)} \sim \frac{z^{-1}}{1} \frac{1}{1} = z^{-1}
\]

\[
v_3(\omega^{\reg}) = \frac{\gamma_P^{\reg}(p_3)}{ds_Q(q_3)}\frac{\gamma_Q^{\reg}(q_1)}{ds_Q(q_2)} \sim \frac{1}{1}\frac{1}{1} = 1
\]

\includegraphics[trim={1cm 0cm 1cm 0cm}, clip, width=6cm]{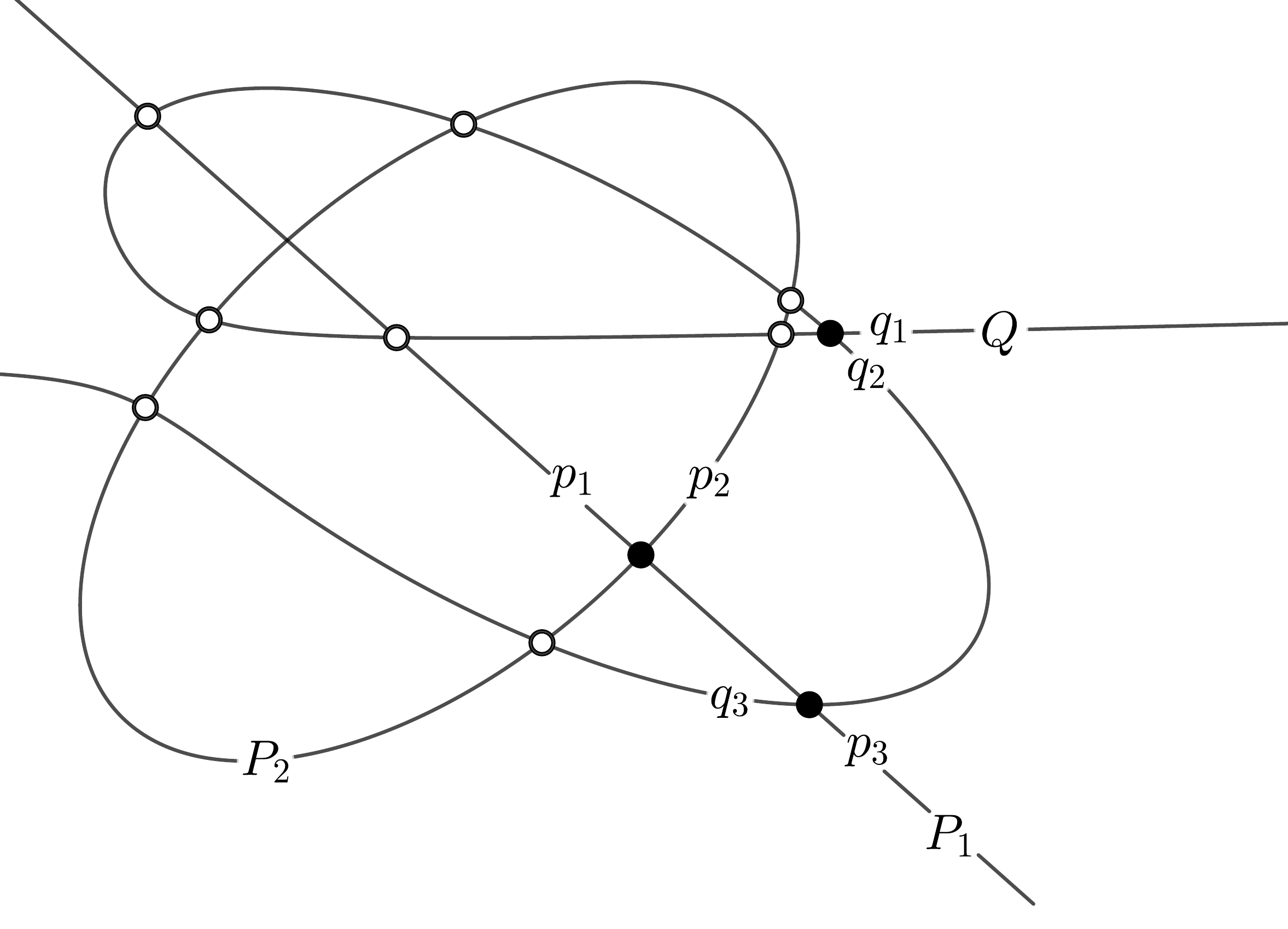}
\includegraphics[trim={1cm 0cm 1cm 0cm}, clip, width=6cm]{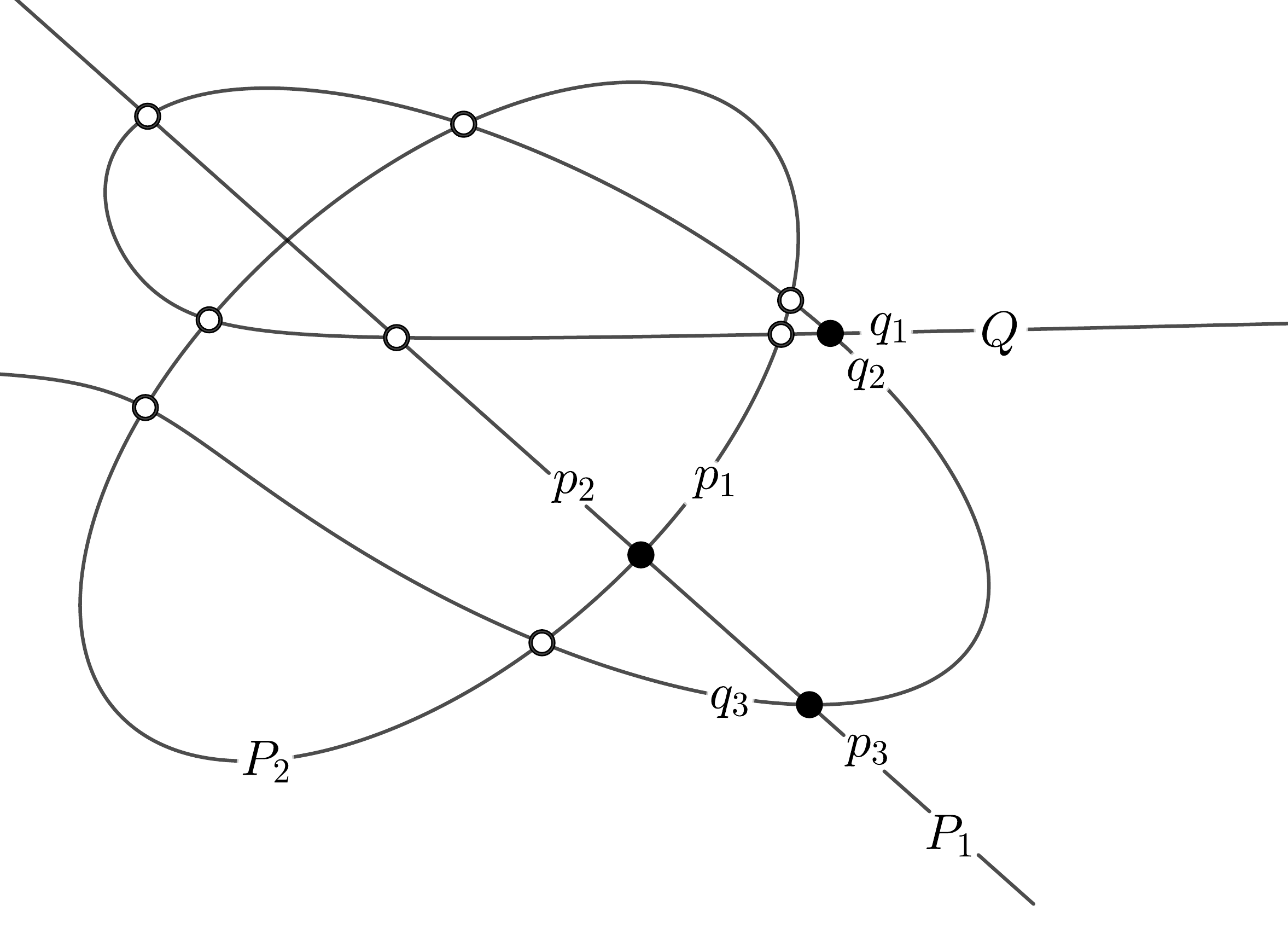}

\includegraphics[trim={1cm 0cm 1cm 0cm}, clip, width=6cm]{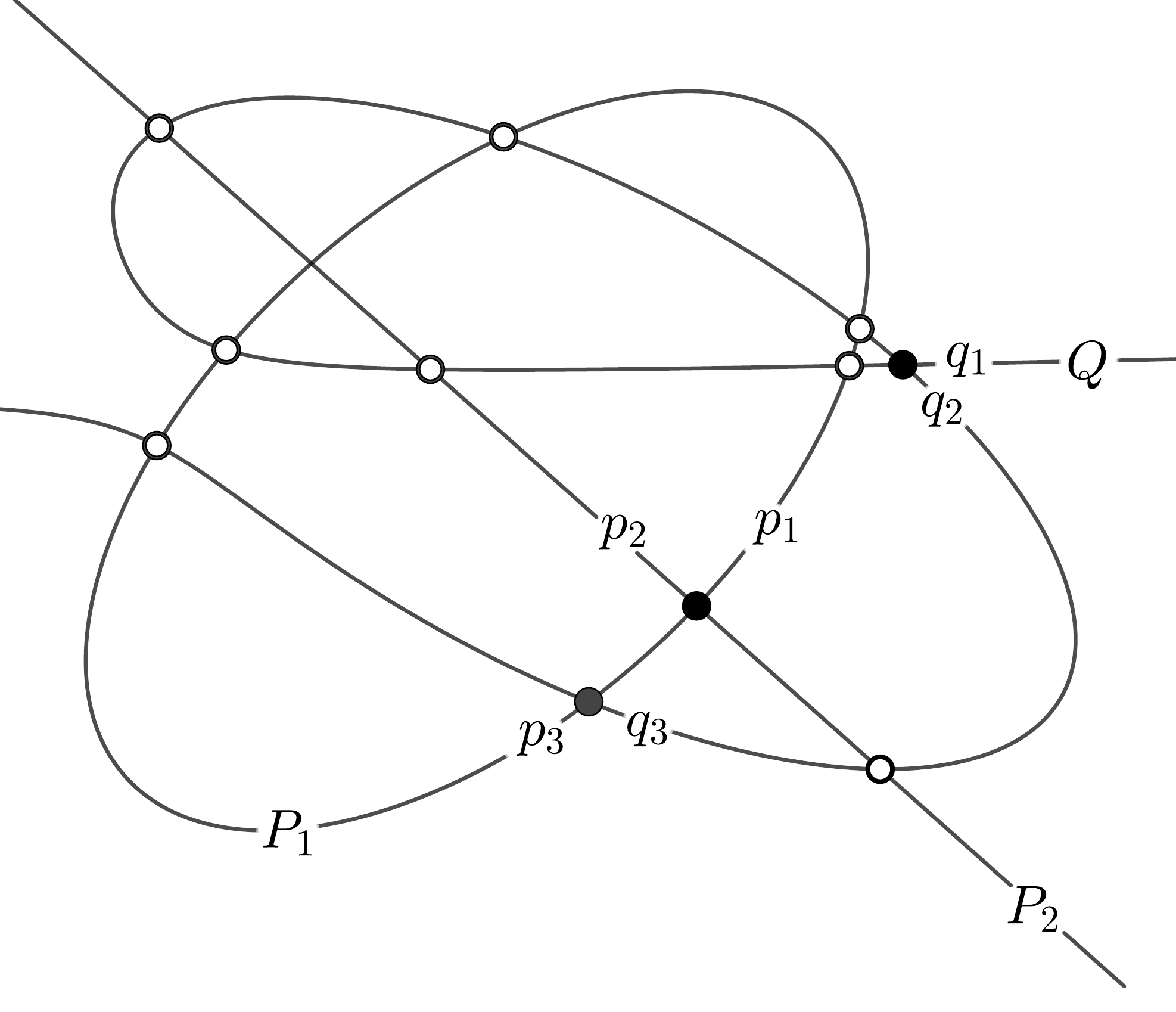}
\includegraphics[trim={1cm 0cm 1cm 0cm}, clip, width=6cm]{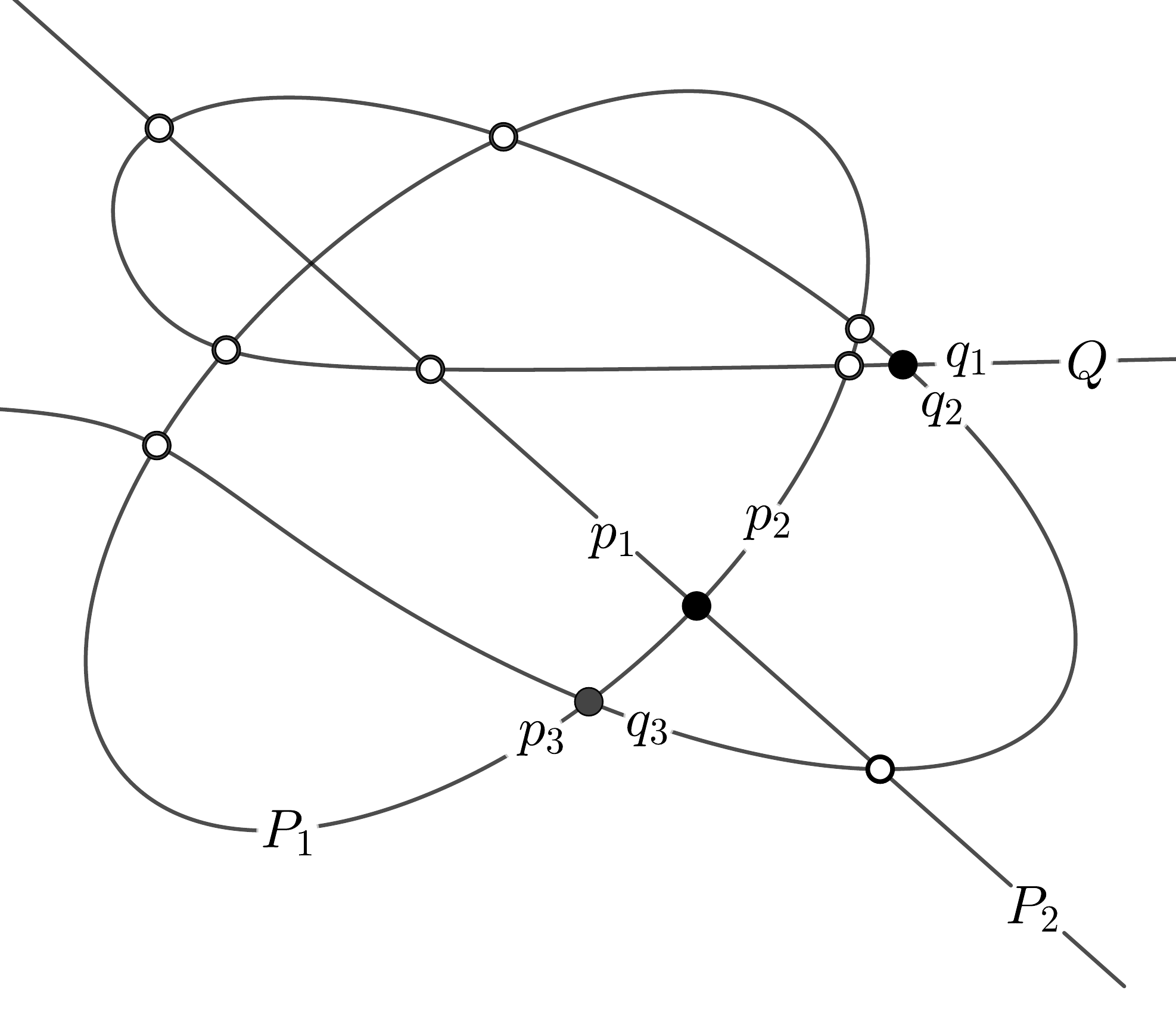}

\textbf{Case 2.} In a case when a cubic $P$ exhibits a cusp we need to make a natural choice of the conormal form $\gamma$, such that its divisor will have a limit for the cuspidal cubic. For the equation $f_P = y^2 - x^3 - z^2 x^2$ the obvious choice is the form \[\gamma_P = \frac{df}{y} = 2dy - \frac{(3x + 2z^2)x}{y} dx = 2dy - \frac{3x+2z^2}{\sqrt{x+z^2}} dx \]

It has a limit for a cuspidal cubic and is also well-defined externally (because Weierstrass normal form is well-defined externally). Similar to blow-up case, a family of nodal cubics degenerating into cusp should be though of as a map from the family given by the equation $f_p$ in $\mathbb{P}^2 \times D$. Then,

Consider also the parametrization $(x = l^2 - z^2, y = l(l^2 - z^2))$. The points $p_1$ and $p_2$ have parameter $l = \pm z$. The tangent vectors in this parametrization are $(z, \pm z^2)$. $\gamma_P$ evaluated on such a vector gives $\pm 4z^2$, $ds$ on such a vector is $\sim z^{-1}$. Also, $\gamma_P$ is to be regularized - it has a pole in the point $(-z^2, 0)$, and the rest of the divisor (which is of total degree $2$) tends to $p_3$.

\[|\gamma_P^{\reg}|_{p_1} \sim |\gamma_P^{\reg}|_{p_2} \sim z^2\]
\[|\gamma_P^{\reg}|_{p_3} \sim z^{-2}\]

\[|ds_P|_{p_1} \sim |ds_P|_{p_2} \sim z^{-1}\]
\[|ds_P|_{p_3} \sim z\]

Now, it is clear that
\[v_1 (\omega^{\reg}) = \frac{\gamma_P^{\reg}(p_1)}{ds_P(p_2)}\frac{\gamma_Q^{\reg}(q_2)}{ds_Q(q_1)} \sim \frac{z^2}{z^{-1}}\frac{1}{1} = z^{3}\]
\[v_2 (\omega^{\reg}) = \frac{\gamma_P^{\reg}(p_2)}{ds_P(p_1)}\frac{\gamma_Q^{\reg}(q_3)}{ds_P(p_3)} \sim \frac{z^2}{z^{-1}}\frac{1}{z} = z^{2}\]
\[v_3 (\omega^{\reg}) = \frac{\gamma_P^{\reg}(p_3)}{ds_Q(q_3)}\frac{\gamma_Q^{\reg}(q_1)}{ds_Q(q_2)} \sim \frac{z^{-2}}{1}\frac{1}{1} = z^{-2}\]

\begin{center}
\includegraphics[trim={1cm 0cm 1cm 0cm}, clip, width=6cm]{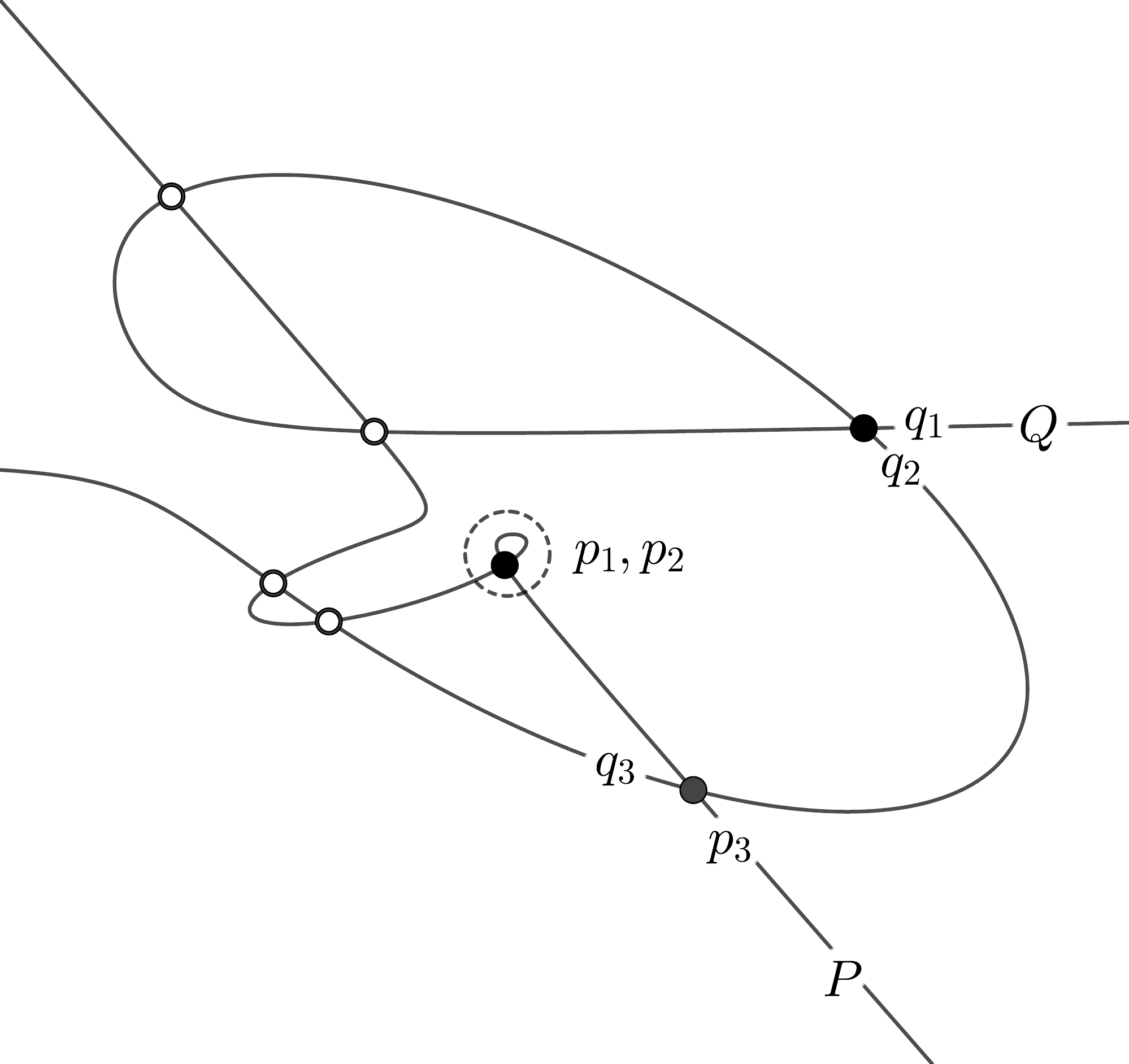}
\end{center}

\textbf{Case 3} In the third case, the conormal forms $\gamma$ will be taken as any generic external meromorphic forms, however, some of their zeros will tend to the triple points and will need to be regularized, too. Consider at first the case 3.0, where the nodal point of $P$ prepares to pass through $Q$ away from the point $3$. Denote the intersections of the branches of $P$ near the node with $Q$ as $b_1$ and $b_2$ respectively. These points are subject to blow-up, hence form a part of divisor $[\gamma_P]$ which tends to triple points. The norms of $ds$ all have the finite limit (neither of cubics with the collection of three points exhibits a degeneration). The only ingredient of $\omega$ which has some asymptotic behavior is $\gamma_P$, who has two of its roots tending to $p_1$ and $p_2$ respectively. After regularization, we get

\[|\gamma_P^{\reg}|_{p_1} \sim z^{-1}\]
\[|\gamma_P^{\reg}|_{p_2} \sim z^{-1}\]
\[|\gamma_P^{\reg}|_{p_3} \sim 1 \]

which leads to asymptotic

\[v_1 (\omega^{\reg}) = \frac{\gamma_P^{\reg}(p_1)}{ds_P(p_2)}\frac{\gamma_Q^{\reg}(q_2)}{ds_Q(q_1)} \sim z^{-1}\]
\[v_2 (\omega^{\reg}) = \frac{\gamma_P^{\reg}(p_2)}{ds_P(p_1)}\frac{\gamma_Q^{\reg}(q_3)}{ds_P(p_3)} \sim z^{-1}\]
\[v_3 (\omega^{\reg}) = \frac{\gamma_P^{\reg}(p_3)}{ds_Q(q_3)}\frac{\gamma_Q^{\reg}(q_1)}{ds_Q(q_2)} \sim = 1\]

by the same argument as before (point $1$ neighboring $p_2$, and point $2$ neighboring $p_1$).

\begin{center}
\includegraphics[trim={1cm 0cm 1cm 0cm}, clip, width=6cm]{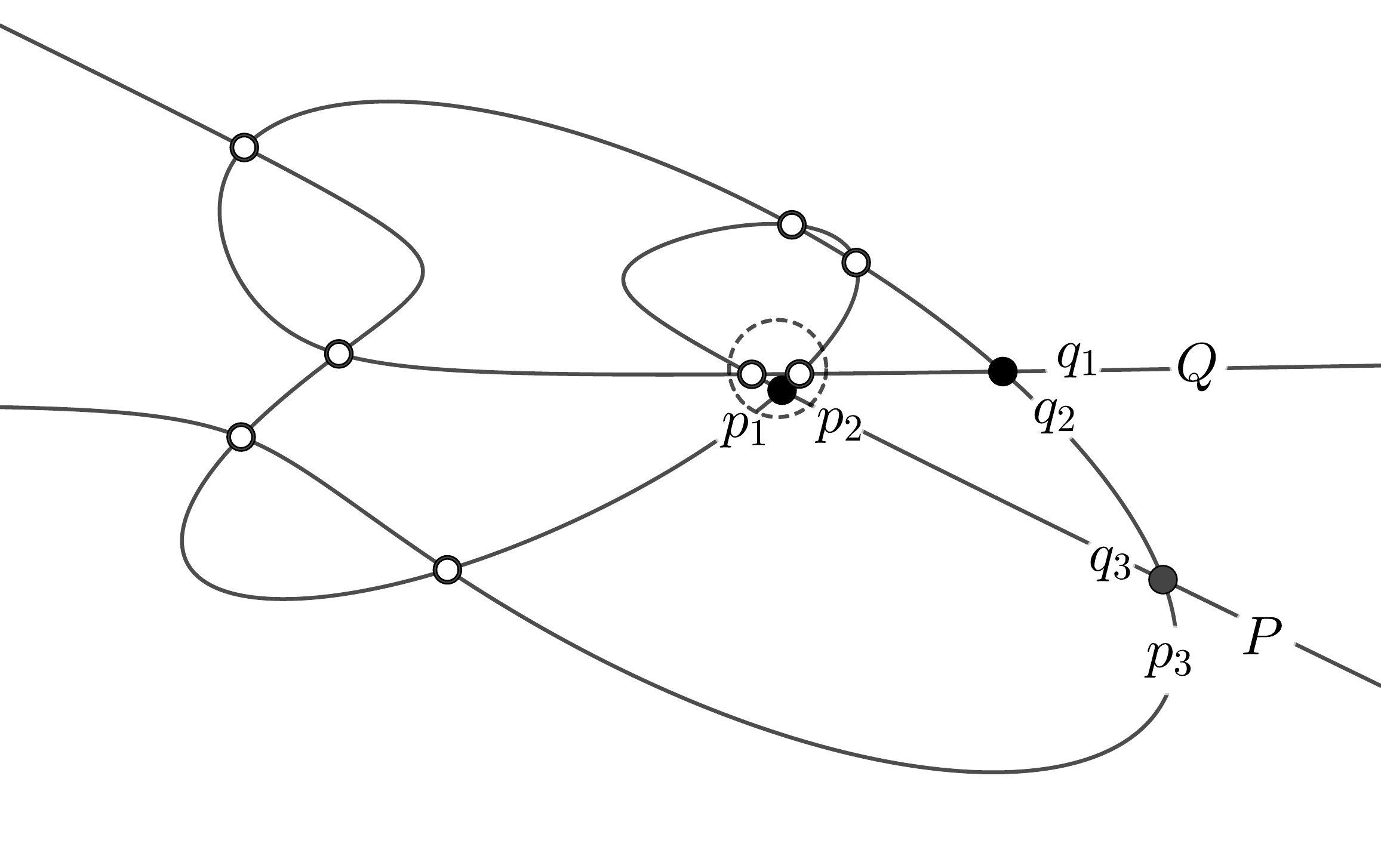}
\end{center}

Cases 3.1 and 3.2 are a bit different. Take as the parameter $z$ the difference between ($p_1$ and $p_3$ or $p_2$ and $p_3$, respectively)
In the case 3.1

\[|ds_P|_{p_1} \sim z^{-1}\]
\[|ds_P|_{p_2} \sim z\]
\[|ds_P|_{p_3} \sim z^{-1}\]

\begin{proof} Parametrize the cubic $P$ externally (say, standard rational parametrization by projection on a fixed line). Then, external parametrization of $P$ exhibits a finite limit, and in it $\dist(p1, p3) \sim z$. Then, $|ds|$ in a triple point is proportional to the Jacobian of the rational map from external parametrization to internal.
\end{proof}

Denote the nearby point of intersection of the branch of $p_2$ with $Q$ as $m$. The divisor $[\gamma_{P}] - m$ tends to $p_2$ with $\sim z$, and $m$ tends with $\sim z^2$ (due to it actually moving with $\sim z$ towards $p_2$ in external parametrization). The degree of $[\gamma_P]$ is $1$, so
\[\gamma_P^{\reg}|_{p_2} \sim z^{-2}\]

also, $\gamma_Q$ now also requires regularization - the divisor $[\gamma_Q] - m$ has a finite limit, and $m$ tends to $q_3$ with $\sim z$. Hence,

\[\gamma_Q^{\reg}|_{q_3} \sim z^{-1}\]

we are now in position to calculate the asymptotic.

\[v_1 (\omega^{\reg}) = \frac{\gamma_P^{\reg}(p_1)}{ds_P(p_2)}\frac{\gamma_Q^{\reg}(q_2)}{ds_Q(q_1)} \sim \frac{1}{z}\frac{1}{1} = z^{-1}\]
\[v_2 (\omega^{\reg}) = \frac{\gamma_P^{\reg}(p_2)}{ds_P(p_1)}\frac{\gamma_Q^{\reg}(q_3)}{ds_P(p_3)} \sim \frac{z^{-2}}{z^{-1}}\frac{z^{-1}}{z^{-1}} = z^{-1}\]
\[v_3 (\omega^{\reg}) = \frac{\gamma_P^{\reg}(p_3)}{ds_Q(q_3)}\frac{\gamma_Q^{\reg}(q_1)}{ds_Q(q_2)} \sim \frac{1}{1}\frac{1}{1} = 1\]

Now to the case 3.2.

In the case 3.2
\[|ds_P|_{p_1} \sim z\]
\[|ds_P|_{p_2} \sim z^{-1}\]
\[|ds_P|_{p_3} \sim z^{-1}\]

(same argument, but now $p_2$ tends to $p_3$)

Again, denote the nearby point of blow-up as $m$. Now, divisor $[\gamma_P]-m$ tends to $p_1$ with $\sim z$, yet still has degree $0$, and $m$ tends to $p_1$: points $p_2$ and $p_1$ switch roles on the component $P$, but on the component $Q$ situation is the same - $m$ still tends to $q_3$ with $\sim z$. So,

\[\gamma_P^{\reg}|_{p_1} \sim z^{-2}\]

\[\gamma_Q^{\reg}|_{q_3} \sim z^{-1}\]

which leads to

\[v_1 (\omega^{\reg}) = \frac{\gamma_P^{\reg}(p_1)}{ds_P(p_2)}\frac{\gamma_Q^{\reg}(q_2)}{ds_Q(q_1)} \sim \frac{z^{-2}}{z^{-1}}\frac{1}{1} = z^{-1}\]
\[v_2 (\omega^{\reg}) = \frac{\gamma_P^{\reg}(p_2)}{ds_P(p_1)}\frac{\gamma_Q^{\reg}(q_3)}{ds_P(p_3)} \sim \frac{1}{z}\frac{z^{-1}}{z^{-1}} = z^{-1}\]
\[v_3 (\omega^{\reg}) = \frac{\gamma_P^{\reg}(p_3)}{ds_Q(q_3)}\frac{\gamma_Q^{\reg}(q_1)}{ds_Q(q_2)} \sim \frac{1}{1}\frac{1}{1} = 1\]

\includegraphics[trim={1cm 0cm 1cm 0cm}, clip, width=6cm]{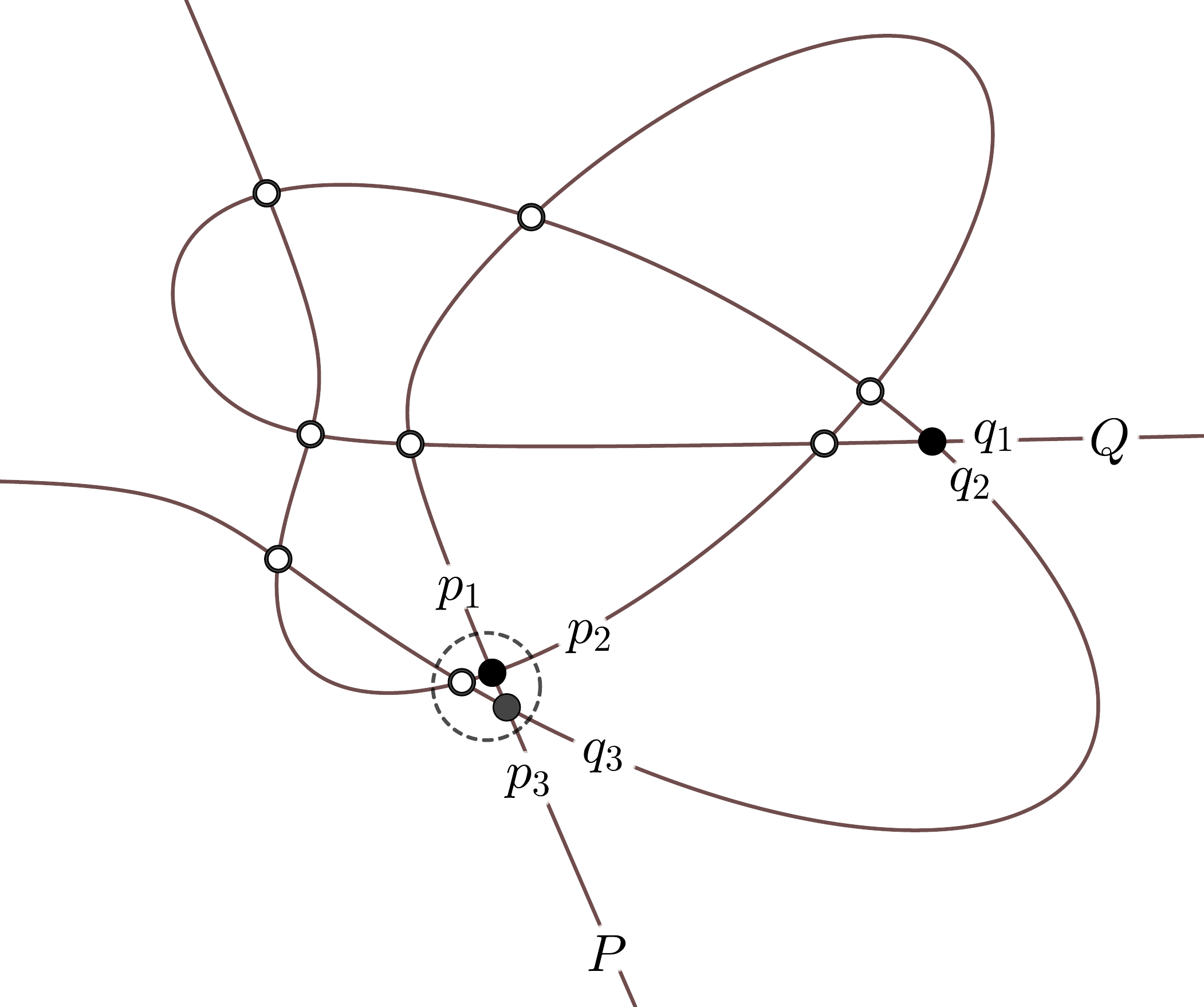}
\includegraphics[trim={1cm 0cm 1cm 0cm}, clip, width=6cm]{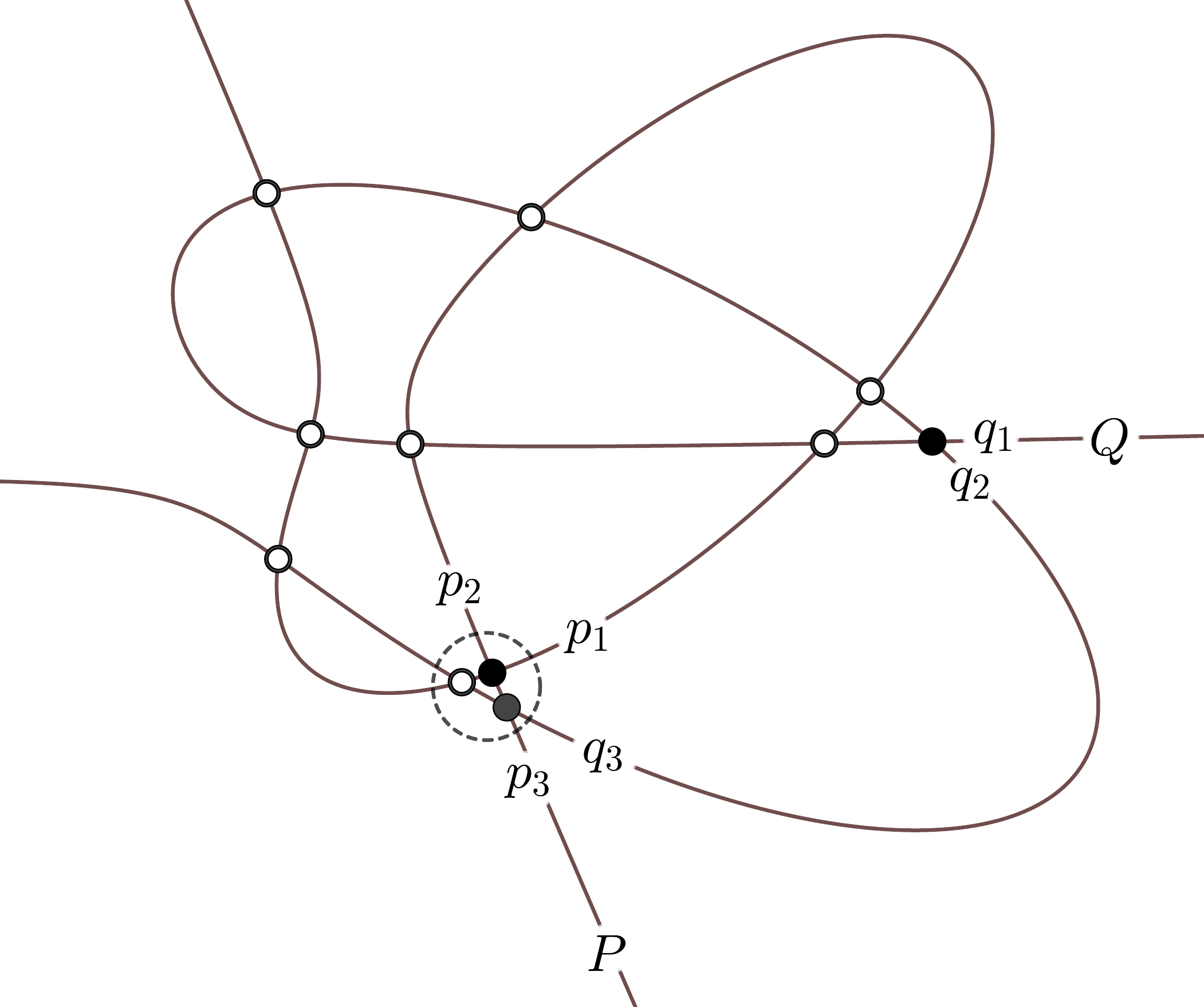}

\begin{theorem} Consider the $1$-parametric family in $M$ which only exhibits degenerations above. Then, its image (which we will denote $C$) intersects the quadric $Z$ away from the degeneration.
\end{theorem}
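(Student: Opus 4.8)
The plan is to realise the family as a complete curve in $\bar M_O=\mathbb P^2$ and then apply Bézout's theorem, using the asymptotic analysis of the previous subsection only to locate the finitely many ``boundary'' points of that curve. First I would compactify: write the family as a morphism $B^{\circ}\to M$ from a smooth quasi-projective curve, let $B\supseteq B^{\circ}$ be the smooth completion and $F=B\setminus B^{\circ}$ the finite degeneration locus. An honest construct has an honest, hence nondegenerate, obstruction line bundle, so the obstruction map carries $B^{\circ}$ into $M_O\cong(\mathbb C^*)^2$; since an affine surface contains no complete curves, the family must genuinely degenerate, i.e.\ $F\ne\emptyset$ (if $O$ were constant there would be nothing to prove). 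By hypothesis the degenerations over $F$ are those of Cases~$0$--$3$, so Proposition~\ref{bubble} and the explicit case computations show that $O$ extends to a morphism $\bar O\colon B\to\bar M_O=\mathbb P^2$, and the computed asymptotics identify $\bar O(F)$: in Cases~$1$ and~$3$ one has $v_1\sim z^{-1}$, $v_2\sim z^{-1}$, $v_3\sim 1$, so $\bar O$ takes the value $[1:1:0]$, which lies on the coordinate line $\{v_3=0\}$ but is not a vertex; in Case~$2$ one has $v_1\sim z^3$, $v_2\sim z^2$, $v_3\sim z^{-2}$, i.e.\ $(v_1:v_2:v_3)=(z^5:z^4:1)$, so $\bar O$ takes the vertex value $[0:0:1]$, and there the local branch of the image has tangent cone the coordinate line $\{v_1=0\}$. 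In particular $\bar O$ is nonconstant, so $C:=\bar O(B)$ is an irreducible plane curve of some degree $d\ge 1$, and $C$ meets the coordinate triangle $\bar M_O\setminus M_O$ only in the finite set $\bar O(F)\subseteq\{[1:1:0],\,[0:0:1]\}$.

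Next I would locate $C\cap Z$. By Proposition~\ref{blowup} the quadric $Z$ is a conic through the three coordinate vertices; being a conic it meets each coordinate line exactly in the two vertices lying on it, so $Z$ meets the coordinate triangle only in those three vertices, and its tangent line at $[0:0:1]$ (read off from the linear part of its equation there) is the line through $[0:0:1]$ and $[1:1:0]$, namely $\{v_1=v_2\}$, which is distinct from $\{v_1=0\}$. Since $C$ touches the triangle only at $[1:1:0]$ and $[0:0:1]$ and the former is not a vertex, the only point of the triangle lying on both $C$ and $Z$ is $[0:0:1]$; hence $C\cap Z\subseteq M_O\cup\{[0:0:1]\}$.

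Now the conclusion. If $C\subseteq Z$ we are done: the generic point of $C$ lies in $M_O$ and is a nondegenerate member of the family lying on $Z$. Otherwise $C\cap Z$ is finite and, by Bézout, $\sum_x (C\cdot Z)_x=2d>0$. Suppose for contradiction $C\cap Z\cap M_O=\emptyset$; then $C\cap Z=\{[0:0:1]\}$, so the family has a Case~$2$ degeneration and, by the first paragraph, every local branch of $C$ at $[0:0:1]$ has tangent line $\{v_1=0\}$, transverse to the tangent $\{v_1=v_2\}$ of $Z$. Therefore $(C\cdot Z)_{[0:0:1]}=\sum_{\text{branches }b}\mathrm{mult}(b)=\mathrm{mult}_{[0:0:1]}(C)$, so $\mathrm{mult}_{[0:0:1]}(C)=2d$; but an irreducible plane curve satisfies $\mathrm{mult}_p(C)\le\deg C=d$, forcing $2d\le d$ — a contradiction. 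Hence $C\cap Z$ meets $M_O$, i.e.\ $C$ meets $Z$ at a nondegenerate configuration, which is the assertion; concretely, some honest construct $X$ in the family has $(T_X^1)^*\in Z$, so blowing up the corresponding ninth point yields a $d$-semistable normal crossing surface.

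The main obstacle — the place where, as the author warns, the moduli-theoretic arguments are not clean — is the first paragraph: that Cases~$0$--$3$ really exhaust the codimension-one degenerations of a sufficiently general such family, that $\bar O$ has no base points, and above all that the regularised asymptotics compute not merely the limit point but the tangent cone of $\bar O$ at the Case~$2$ boundary point (in particular that the twist by $[\omega]$ entering through $(T_X^1)^*\simeq L([\omega])$ leaves that tangent cone equal to $\{v_1=0\}$). Granting that local analysis, the remainder is a formal Bézout-plus-multiplicity count; what is genuinely delicate is pinning down the local geometry of $C$ at $[0:0:1]$ precisely enough to see that its tangent cone differs from that of $Z$.
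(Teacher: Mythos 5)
Your argument is correct and shares the paper's setup (compactify the family, send non-cuspidal degenerations to points of the coordinate triangle away from the vertices, hence off $Z$, and cuspidal ones to the vertex $[0:0:1]$ along $(z^5,z^4,1)$), but the endgame is genuinely different. The paper runs B\'ezout against \emph{both} $Z$ and the coordinate triangle $T$: each cuspidal point contributes $4$ to $C\cdot Z$ and $9$ to $C\cdot T$, so concentrating $C\cdot Z=2\deg C$ at cuspidal points would force $C\cdot T\ge\frac{9}{4}\cdot 2\deg C>3\deg C$. You instead use a single B\'ezout count against $Z$ plus transversality of tangent cones at $[0:0:1]$ and the bound $\mathrm{mult}_p(C)\le\deg C$. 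The two arguments need the same unstated input about $Z$: your requirement that its tangent at $[0:0:1]$ differ from $\{v_1=0\}$ is exactly the nonvanishing of the $v_2v_3$-coefficient in its equation, which is also what makes the paper's local intersection number equal to $4$; neither proof checks it, though it holds since the collinearity condition gives an equation of the form $\pm v_1v_2\pm v_1v_3\pm v_2v_3$, a smooth conic. Your specific identification of that tangent as $\{v_1=v_2\}$ is unjustified and coordinate-dependent (as is placing the nodal degenerations exactly at $[1:1:0]$ rather than at some $[c_1:c_2:0]$ with $c_1c_2\ne 0$), but only the weaker statements are used. Your route is somewhat more robust in that it needs only $\ord_z v_1>\ord_z v_2$ at the vertex rather than the precise exponents $(5,4)$ on which the ratio $\frac{9}{4}\cdot 2>3$ depends; the cost is the extra multiplicity bound and a little care with branches of the image when $\bar O$ is not birational onto $C$. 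The genuine gaps you flag --- exhaustiveness of the degeneration list, non-constancy of $\bar O$, and passing from the asymptotics of $v_p(\omega^{\reg})$ to the actual limit in $\bar M_O$ --- are shared by the paper's proof.
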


\begin{proof}
All degenerations except the cuspidal are automatically away from $Z$, because their image in $\bar{M_O}$ is on coordinate lines, but not in the coordinate points. Denote by $T$ the union of three lines. Every cuspidal degeneration corresponds to the intersection of the family with $T$ of order $9$ (because it tends along the toric orbit $(z^3, z^2, z^{-2}) = (z^5, z^4, 1)$), and it also corresponds to the intersection of order $4$ with $Z$. But intersection of $Z$ with $C$ is $2 \deg(C)$, and with $T$ it is $3 \deg(C)$. Suppose all intersections of $Z$ with $C$ would be in the cuspidal degeneration. It would lead to at least $\frac{9}{4} 2 \deg(C) = \frac{9}{2} \deg(C) > 3 \deg(C)$ intersections with $T$ (taken with multiplicity), which is absurd.
\end{proof}

\begin{corollary}
There exists a d-semistable construct. Moreover, the fiber of $O$ over the trivial $T_X^1$ (space of d-semistable constructs) has the expected dimension $7$
\end{corollary}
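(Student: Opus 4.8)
The first assertion is immediate from the preceding Theorem. Write $O_M\colon M\to M_O$ for the eight–point obstruction map considered above, valued in the moduli of degree $-1$ line bundles on $S=\mathbb P^1/(0,1,\infty)$, and recall $\bar M_O=\mathbb P^2\supset Z$; keep the letter $O$ for the nine–point map $M_X\to M_O$ of the statement. A generic one–parameter family has image $C$ meeting $Z$ at a point $O_M(m_0)$ lying off the coordinate lines and corresponding to a non-degenerate configuration $m_0$; by Claim \ref{blowup} there is a unique point $x_0$ on the second cubic with $O_M(m_0)(x_0)\simeq\mathcal O$, and blowing up $x_0$ produces a construct $X$ with $(T_X^1)^*\simeq\mathcal O$, i.e.\ a $d$-semistable one.

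For the dimension statement, let $\pi\colon M_X\to M$ forget the ninth blow-up point; its fibre is the normalized second cubic $\cong\mathbb P^1$, so $\dim M_X=\dim M+1=9$, while $\dim M_O=2$ and the expected dimension of a fibre of $O$ is $9-2=7$. Now a pair $(m,x)\in M_X$ gives a $d$-semistable construct precisely when the eight-point obstruction $L=O_M(m)$ satisfies $L(x)\simeq\mathcal O$, which by Claim \ref{blowup} forces $L\in Z$ and then determines $x$ uniquely from $L$. Hence $\pi$ maps $O^{-1}(\mathcal O)\subset M_X$ bijectively onto $O_M^{-1}(Z)\subset M$, and in particular
\[
\dim O^{-1}(\mathcal O)=\dim O_M^{-1}(Z).
\]

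It remains to compute $\dim O_M^{-1}(Z)$. It is non-empty by the Theorem, and it is a proper subvariety of $M$: the asymptotic computations of the previous subsection exhibit configurations whose obstruction bundle does not lie on $Z$ — indeed the proof of the Theorem already treats $C\cdot Z$ as a finite intersection number, i.e.\ $C\not\subset Z$, which also gives $O_M(M)\not\subset Z$. Furthermore $O_M$ is non-constant, since generic configurations map into the open torus $M_O\subset\bar M_O$ while the degenerations of Cases 1–3 map to $\partial\bar M_O$. Restricting the target to the closure $D$ of the image makes $O_M\colon M\to D$ dominant with $1\le\dim D\le 2$; as $D\not\subseteq Z$ and $Z$ is a divisor in $\bar M_O$, the set $Z\cap D$ has dimension $\dim D-1$. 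By the theorem on dimension of fibres, every component of $O_M^{-1}(Z)=O_M^{-1}(Z\cap D)$ has dimension at least $\dim M-\dim D+(\dim D-1)=7$, and at most $7$ since $O_M^{-1}(Z)\neq M$. Thus $O_M^{-1}(Z)$, and with it the space of $d$-semistable constructs, is equidimensional of dimension $7$.

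The single point here that is not bookkeeping — and where the argument leans on exactly the kind of non-clean moduli reasoning the introduction warns about — is the assertion $O_M(M)\not\subset Z$. A conceptually cleaner route would be to prove dominance of $O$ directly, e.g.\ by checking surjectivity of $dO$ at the $d$-semistable construct built in the first paragraph, which would pin the fibre dimension to $7$ locally at that point; but establishing this needs more than the asymptotic data assembled above, so absent it I would argue as here.
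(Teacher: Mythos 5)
Your proof is correct and follows essentially the same route as the paper: existence comes from the preceding Theorem, the $d$-semistable locus in $M_X$ is identified with $O_M^{-1}(Z)\subset M$ via Claim \ref{blowup}, and the dimension $7$ follows because $Z$ is a divisor (so every component of its preimage has codimension at most $1$) while irreducibility of the $8$-dimensional $M$ together with $O_M(M)\not\subset Z$ gives the matching upper bound. Your explicit justification that $O_M(M)\not\subset Z$ (via the degenerations landing on coordinate lines off the coordinate points) is a slightly more careful version of the paper's parenthetical remark that otherwise every family's image would lie in $Z$, and it avoids the paper's unsubstantiated appeal to transversality.
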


\begin{proof}
Consider generic $1$-parametric family in $M$, and choose intersection of its image with $Z$ (for example, pick a generic plane in the projective space of all cubic curves and choose all degenerate ones, then consider cover corresponding to branch choices). Moreover, because it intersects $Z$ transversely, the preimage of $Z$ in $M$ has the dimension $7$ (if it would be $8$ image of every family would lie in $Z$ due to $M$ being irreducible). But locus $O^{-1}(Z) \in M$ is identified with the space of d-semistable constructs by the proposition $\ref{blowup}$.
\end{proof}

\begin{note} It is extremely important that the dimension of the locus of d-semistable constructs is expected, and the trick done here is very low-dimensional.
\end{note}

\subsection{Smoothing}

We are unable to use logarithmic deformation theory directly: while it is easy to calculate that $h^1(X, T_X(log)) > h^2(X, T_X(log))$, it doesn't seem to be a sufficient condition for the existence of the logarithmic deformations. We were able to show that $h^2(X, T_X(log)) = 0$ in case the d-semistable locus $O^{-1}(1) \in M_X$ is smooth (so, it is enough to prove that it is reduced then it will have a smooth point) but this statement in itself seems to be very tedious to prove.

Instead, we make use of the full deformations, similar to original Friedman's approach. From now on, we will denote $T_X$ as $T_X^0$ to emphasise that it is only a $0$th local cohomology of the tangent complex.

We start off by showing that $H^2(X, T_X^0) = 0$. Denote by $\bar{X}$ the normalization of the construct, by $D = \bigcup_i D_i = P \cup Q$ the gluing locus, by $\pi: \bar{X} \rightarrow X$ the normalization (gluing) map.

\begin{lemma}
$H^2(T_{\bar{X}}(D)) = 0$
\end{lemma}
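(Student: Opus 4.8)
The plan is to dualise and reduce the statement to the vanishing of global logarithmic $1$-forms on the pair $(\bar X,D)$. Recall that $\bar X$ is $\mathbb P^2$ blown up at the nine chosen points (the eight points of $C_1\cap C_2$ and one further point on the second cubic) and $D=P\cup Q$, with $P,Q$ the proper transforms of the two nodal cubics. For a generic member of the $7$-dimensional family of $d$-semistable constructs produced above, the only singularities of $D$ are the node of $P$, the node of $Q$, and the single transverse point $P\cap Q$, so $D$ is a nodal curve, hence a normal crossing divisor in the analytic-local sense, and $\Omega^1_{\bar X}(\log D)$ is locally free of rank $2$ with dual $T_{\bar X}(D)$. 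Serre duality on the smooth projective surface $\bar X$ then identifies
\[
H^2\big(\bar X,\,T_{\bar X}(D)\big)^{\vee}\ \cong\ H^0\big(\bar X,\,\Omega^1_{\bar X}(\log D)\otimes\omega_{\bar X}\big),
\]
so it suffices to kill the right-hand group.

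The point I would exploit is that the second cubic passes through all nine blown-up points, so $\mathcal O_{\bar X}(Q)\cong\omega_{\bar X}^{-1}$ (indeed $-K_{\bar X}=3H-\sum_{i=1}^9 E_i=Q$): the anticanonical class is represented by a component of $D$. Choosing any nonzero $s\in H^0(\bar X,\mathcal O_{\bar X}(Q))$, a class
\[
\alpha\in H^0\big(\bar X,\,\Omega^1_{\bar X}(\log D)(-Q)\big)=\Hom_{\mathcal O_{\bar X}}\!\big(\mathcal O_{\bar X}(Q),\,\Omega^1_{\bar X}(\log D)\big)
\]
produces, via composition with multiplication by $s$, an honest global logarithmic form $\alpha(s)\in H^0(\bar X,\Omega^1_{\bar X}(\log D))$. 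I would then show $H^0(\bar X,\Omega^1_{\bar X}(\log D))=0$ from the residue sequence
\[
0\to\Omega^1_{\bar X}\to\Omega^1_{\bar X}(\log D)\xrightarrow{\ \mathrm{res}\ }\nu_*\mathcal O_{\tilde D}\to 0,
\]
with $\nu\colon\tilde D=\tilde P\sqcup\tilde Q\to D$ the normalization: since $H^0(\bar X,\Omega^1_{\bar X})=0$ ($\bar X$ rational), $H^0(\Omega^1_{\bar X}(\log D))$ embeds into $H^0(\tilde D,\mathcal O)=\mathbb C^2$ as the kernel of the connecting map $\mathbb C^2\to H^1(\bar X,\Omega^1_{\bar X})$, $(a,b)\mapsto a\,c_1(\mathcal O(P))+b\,c_1(\mathcal O(Q))$; and $[P]=3h-\sum_{i=1}^8\varepsilon_i$, $[Q]=3h-\sum_{i=1}^9\varepsilon_i$ are linearly independent in $\operatorname{NS}(\bar X)_{\mathbb C}\cong H^1(\bar X,\Omega^1_{\bar X})$, so that map is injective and its kernel is $0$. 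Hence $\alpha(s)=0$, and since $s\neq 0$ and $\Omega^1_{\bar X}(\log D)$ is torsion-free this forces $\alpha=0$, which is the desired vanishing.

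The step I expect to be the real content (and the one to handle carefully) is the first reduction: it works precisely because $\omega_{\bar X}^{-1}$ is effective and supported on $D$, a feature of this construction — the extra ninth blow-up is placed on the second cubic exactly so that $Q$ stays anticanonical even though $K_{\bar X}^2=0$. Besides this I would verify the normal-crossing property of $D$ and the genericity assumptions used (the two cubics in general position, the node of $C_2$ not among the blown-up points, and the ninth point not the Cayley--Bacharach point of the pencil through the eight base points); each is an open condition, so all of them hold on a dense open subset of the locus of $d$-semistable constructs, which is enough.
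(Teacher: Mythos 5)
Your argument is correct, but it is a genuinely different route from the one in the paper. The paper computes $\chi(T_{\bar X}(D))=-9$ from the normal-bundle resolution $0\to T_{\bar X}(D)\to T_{\bar X}\to\bigoplus_i N_{D_i}\to 0$ and then argues ``backwards'': the moduli space of configurations is visibly smooth, reduced and $9$-dimensional, so $h^1=9=-\chi$ forces $h^0=h^2=0$. That argument is uniform in the construct and fits the expected-dimension philosophy used throughout the paper, but it leans on the informal identification of the configuration space with the deformation space of the pair $(\bar X,D)$ and on its asserted smoothness. Your proof replaces this by a self-contained cohomological computation: Serre duality gives $H^2(T_{\bar X}(D))^{\vee}\cong H^0(\Omega^1_{\bar X}(\log D)\otimes\omega_{\bar X})$, effectivity of $-K_{\bar X}$ (here $-K_{\bar X}=Q$) embeds this into $H^0(\Omega^1_{\bar X}(\log D))$, and the residue sequence kills the latter because $[P]=3h-\sum_{i=1}^{8}\varepsilon_i$ and $[Q]=3h-\sum_{i=1}^{9}\varepsilon_i$ are independent in $H^1(\Omega^1_{\bar X})$. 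Note that for the last reduction you only need $-K_{\bar X}$ effective, not that it is supported on $D$, so your ``precisely because'' is slightly stronger than what the argument uses. The price you pay is the genericity hypothesis that $D$ be nodal (so that $\Omega^1_{\bar X}(\log D)$ is locally free and the residue sequence has the stated form); this is harmless here because the $d$-semistable constructs produced later are obtained as intersections with $Z$ \emph{away from} the degenerations of Cases 1--3, which are exactly the configurations where $D$ acquires worse singularities, but it is worth stating explicitly that the lemma is being proved only on that open locus, whereas the paper's count (to the extent it is a proof) does not need this restriction.
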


\begin{proof}
Let us first calculate $\chi(T_{\bar{X}}(D))$. It admits the following resolution:

\[0 \rightarrow T_{\bar{X}}(D) \rightarrow T_{\bar{X}} \rightarrow \bigoplus_i N_{D_i} \rightarrow 0\]

The strategy of our proof is backwards - we just describe the moduli space of a construct directly in geometric terms and then see that it is reduced, smooth and has the expected dimension ($ - \chi(T_{X} + (P^2 + 1) + (Q^2 + 1))$).

$M_X$ is clearly smooth and reduced, and has the dimension $9$. $X$ is $\mathbb{P}^2$ blown up in 9 points, so $\chi(T_X) = 8-2*9 = -10$. $P^2 = -2$, $Q^2 = -1$, and we obtain the desired expected dimension.
\end{proof}

Now, consider the sheaf $T^0_X$ on $X$. Its cohomologies control locally trivial deformations of the construct. The sheaf $T^0_X$ admits the following resolution:

\[0 \rightarrow T^0_X \rightarrow T_{\bar{X}}(D) \rightarrow \bigoplus_{C \in E} T_C (-\sum_{p \in T, C} p)  \rightarrow 0\]

The group $H^1(X, \bigoplus_{C \in E} T_C (-\sum_{p \in T, C} p))$ in case every curve in the construct has not more than three triple points. Applying the long exact sequence, we obtain that $h^2(X, T^0_X)$ of this sheaf vanishes, too. Moreover, the description of deformations of the construct has evident geometric sense:

\[H^0(X, T_{\bar{X}}(D)) \rightarrow H^0(X, \bigoplus_{C \in E} T_C (-\sum_{p \in T, C} p)) \rightarrow H^1(X, T_X) \]\[\rightarrow H^1(X,  T_{\bar{X}}(D)) \rightarrow H^1(X, \bigoplus_{C \in E} T_C (-\sum_{p \in T, C} p)) \rightarrow H^2(X, T_X) \rightarrow 0\]

First of all, $H^1(X, \bigoplus_{C \in E} T_C (-\sum_{p \in T, C} p)) = 0$, because all our curves are rational and have no more than $3$ triple points, hence these line bundles are of degree at least $-1$. It also makes sense from the deformation point of view: if there would be more than three points triple points, the it wouldn't be always possible to glue a construct from its components, and $H^2$ would control this (infinitesimal) obstruction.

\[H^0(X, T_{\bar{X}}(D)) \rightarrow H^0(X, \bigoplus_{C \in E} T_C (-\sum_{p \in T, C} p)) \rightarrow \]\[ \rightarrow H^1(X, T_X) \rightarrow H^1(X, T_{\bar{X}} (D)) \rightarrow 0\]

The remaining arrows in this diagram have straightforward geometric interpretation, too. From right to left:

\begin{itemize}
\item deformations of the construct yield deformations of its components, this map is surjective
\item deformations which do not change the components are controlled by the changes in the gluing data
\item not all changes in the gluing data are non-trivial - some correspond to the automorphisms of the components
\end{itemize}

The reference for the rest of discussion is Friedman's proposition 4.3 and around.

Consider the following local to global exact sequence for the tangent cohomology

\[H^1(T_X^0) \rightarrow T^1(X) \rightarrow H^0 (T_X^1) \rightarrow\]

\[H^2(T_X^0) \rightarrow T^2(X) \rightarrow H^1 (T_X^1) \rightarrow 0\]

it shows that $T^2(X) \simeq H^1(T_X^1)$ and $T^1$ is an extension of $H^0 (T_X^1)$ by $H^1(T_X^0)$ (it splits, but we will care about the structure of Massey operations on it so it is worth remembering that it is only a filtration, not a direct sum). Let us denote the coordinates in $T^1(X)$ as $a_1, ..., a_n, t$, ($n = 9$) projection to the $1$-dimensional space $H^0 (T_X^1)$ being $t$. Recall that the Kuranishi moduli is a fiber of an analytic mapping from $T^1(X)$ to $T^2(X) = H^1 (T_X^1)$ (so, formal subscheme given by $\dim(H^1 (T_X^1)) = 2$ analytic equations, say, call them $f_1, ..., f_k$, ($k = 2$)). Each of these equations is ought to be divisible by $t$ (because locally-trivial deformations are unobstructed, or we could say that Massey operations (which are terms of these equations) preserve the filtration). Denote by $\tilde{f_i} = \frac{f_i}{t}$.

\begin{theorem} Provided that the locus of d-semistable constructs has expected dimension, the solutions to the system $\tilde{f_i}$ are not contained in the scheme $t \neq 0$.
\end{theorem}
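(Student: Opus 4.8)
The plan is a transversality/dimension count in the Kuranishi space of a suitably chosen $d$-semistable construct $X$. Recall the set-up: $T^1(X)$ has dimension $10$ with coordinates $a_1,\dots,a_9,t$, the $a_j$ spanning $H^1(T_X^0)\cong T_{[X]}M_X$ and $t$ projecting to $H^0(T_X^1)\cong\mathbb C$; $T^2(X)\cong H^1(T_X^1)\cong\mathbb C^2$; and the Kuranishi space is $V(f_1,f_2)\subset(T^1(X),0)$ with $f_i=t\,\tilde f_i$. Since the Kuranishi space always contains the whole locally-trivial locus $\{t=0\}\cong(M_X,[X])$ (of dimension $9$), the assertion that $V(\tilde f_1,\tilde f_2)$ is not contained in $\{t=0\}$ is exactly the statement that $X$ admits a deformation with $t\ne0$ --- and, the total space being automatically smooth because $X$ is $d$-semistable, such a deformation has smooth general fibre, i.e.\ is a genuine smoothing.

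The first ingredient is to take $X$ to be a smooth point of the $d$-semistable locus $O^{-1}(1)\subset M_X$. Such a point exists: by hypothesis $O^{-1}(1)$ has the expected dimension $7$, and by the transverse intersection with the quadric $Z$ produced in the previous corollary the obstruction map $O$ is submersive along $O^{-1}(1)$, so near $[X]$ we have $O^{-1}(1)=V(O_1-1,O_2-1)$ with $d(O-1)|_{[X]}$ of rank $2$. The second ingredient is the concrete identification of $\tilde f_i|_{t=0}$: because locally-trivial deformations are unobstructed, $f_i(a,0)\equiv0$, so $\tilde f_i(a,0)$ is the $t$-linear coefficient of $f_i$, and its linear part in $a$ is the bilinear Kuranishi obstruction $\xi\mapsto\langle\partial_t,\xi\rangle\in H^1(T_X^1)=\mathbb C^2$, which is precisely the derivative $dO|_{[X]}$ of the obstruction map. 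Hence $d\tilde f_1|_0,d\tilde f_2|_0$ restrict on $\{t=0\}$ to the linearly independent covectors $dO_1|_{[X]},dO_2|_{[X]}$; a fortiori $d\tilde f_1|_0,d\tilde f_2|_0$ are linearly independent, and moreover $dt,d\tilde f_1|_0,d\tilde f_2|_0$ are linearly independent, since $dt$ is not in the span of two covectors pulled back from $(M_X,[X])$.

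It follows that near $0$ the scheme $V(\tilde f_1,\tilde f_2)$ is smooth of dimension $8$ and $V(\tilde f_1,\tilde f_2)\cap\{t=0\}=V(t,\tilde f_1,\tilde f_2)$ is smooth of dimension $7$; a smooth $8$-fold is not contained in a smooth hypersurface of itself, so $V(\tilde f_1,\tilde f_2)\not\subseteq\{t=0\}$, and its generic point, having $t\ne0$, provides the smoothing. (Even without submersivity one has the cruder version: by Krull's Hauptidealsatz every component of $V(\tilde f_1,\tilde f_2)$ has dimension $\ge8$, while $V(\tilde f_1,\tilde f_2)\cap\{t=0\}$ is contained in $O^{-1}(1)$ of dimension $7$, so $V(\tilde f_1,\tilde f_2)$ cannot lie in $\{t=0\}$ --- this only needs $\dim O^{-1}(1)=7$.)

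The main obstacle is the second ingredient: pinning down $\tilde f_i|_{t=0}$ and matching it with the geometric obstruction map $O$ of \S4 --- the comparison between the abstract Kuranishi/Massey-product description and the explicit description of $T_X^1$ that the paper already warns is ``not clean''. A secondary delicate point is extracting genuine submersivity of $O$ at a point of $O^{-1}(1)$ from the bare statement ``$O^{-1}(1)$ has the expected dimension'' (which a priori allows everywhere non-reduced structure); this is exactly where the transverse intersection with $Z$ from the previous corollary is needed. Everything past that is soft linear algebra together with the standard fact that a one-parameter deformation with $t\ne0$ of a $d$-semistable normal crossing surface has smooth total space and smooth general fibres.
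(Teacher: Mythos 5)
Your parenthetical ``cruder version'' is precisely the paper's proof: identify $V(\tilde f_1,\tilde f_2)\cap\{t=0\}$ with (a subset of) the d-semistable locus, which has dimension $7$ by hypothesis, invoke Krull's Hauptidealsatz to give every component of $V(\tilde f_1,\tilde f_2)$ dimension at least $n+1-d=8$, and conclude that $V(\tilde f_1,\tilde f_2)\not\subseteq\{t=0\}$. The submersivity/transversality refinement occupying the main body of your argument is neither needed for the statement nor established in the paper (which deliberately avoids any smoothness or reducedness claim about the d-semistable locus), but since you correctly present it as optional and retain the dimension count as a self-contained fallback, the proposal is correct and essentially coincides with the paper's argument.
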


\begin{proof} First, it is clear that the solutions of $(\tilde{f_i} = 0, t = 0)$ correspond to the locally trivial deformations in the d-semistable locus - it can be readily seen from the following characterisation of d-semistable locus: ''such constructs that admit first order smoothings'' (it is equivalent to d-semistability provided $H^2(X, T^0_X) = 0$). The expected dimension of d-semistable locus is $n-d$ (n is the amount of locally trivial deformations, d is the amount of deformations of the obstruction bundle). Subspace $(\tilde{f_i} = 0)$ has dimension at least $n+1-d$, so it is not contained in $(\tilde{f_i} = 0, t = 0)$.
\end{proof}

\begin{note} The situation which could possibly occur if the d-semistable locus would too big is that one of the equations could be $t^r = 0$. It would prevent any deformation from lifting to the $r+1$'st order.
\end{note}

\begin{corollary} $X$ is smoothable.
\end{corollary}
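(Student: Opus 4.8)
The plan is to extract a one-parameter smoothing directly from the Kuranishi family of $X$, combining the preceding theorem with Friedman's local analysis of deformations whose $T^1_X$-component does not vanish (Friedman, Prop.~4.3 and around, as cited above).

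First I would record two elementary facts about the obstruction map $(f_1,f_2)\colon T^1(X)\to T^2(X)\cong H^1(T^1_X)$: it vanishes to order $\ge 2$ at the origin (its linear term is zero), and each $f_i$ is divisible by $t$, as noted in the excerpt. Hence $\tilde f_i=f_i/t$ is holomorphic and vanishes at $0$, so the germ $Z'=\{\tilde f_1=\tilde f_2=0\}$ passes through $0$. By the preceding theorem $Z'$ is not contained in $\{t=0\}$, and since every irreducible component of the germ contains $0$, some component $Z\subset Z'$ through $0$ has $t|_Z\not\equiv0$. I would then choose a general holomorphic arc $\gamma\colon(\Delta,0)\to(Z,0)$, so that $t\circ\gamma$ vanishes at $0$ to some finite order $r\ge1$ and is nonzero for $s\ne0$. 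Because $f_i=t\,\tilde f_i$ and $\gamma$ maps into $\{\tilde f_1=\tilde f_2=0\}$, it factors through the Kuranishi space $\{f_1=f_2=0\}$; pulling the Kuranishi family back along $\gamma$ produces a flat family $p\colon\mathcal X\to\Delta$ with $\mathcal X_0\cong X$.

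It remains to see that $\mathcal X_s$ is smooth for $0<|s|\ll1$. Outside $\Sing(X)$ this is automatic, since $X\setminus\Sing(X)$ is smooth, smoothness is an open condition, and $X$ is compact. Near a double-curve point $q$ one has $(X,q)\cong(\{xy=0\},0)\times(\text{line})$, and near a triple point $(X,q)\cong(\{xyz=0\},0)$; by $d$-semistability the generating global section $t$ of $T^1_X\cong\mathcal O_{\Sing(X)}$ restricts to a local unit, and following Friedman's analysis one concludes that the local equation of $\mathcal X$ is $xy=u\cdot s^r$, resp.\ $xyz=u\cdot s^r$, with $u$ a unit. For $s\ne0$ the point $q$ does not lie on $\mathcal X_s$ since the constant term $u(0)\,s^r$ is nonzero, and a direct check with the gradient of $xyz-u\,s^r$ shows $\mathcal X_s$ is nonsingular in the germ for $0<|s|\ll1$ (the double-curve case being the classical $\{xy=\text{const}\}$). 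Thus $\mathcal X_s$ is smooth everywhere, and $p\colon\mathcal X\to\Delta$ is the desired smoothing.

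The main obstacle is the local step — deducing from $t(\gamma(s))\ne0$ that the equation of $\mathcal X$ near a triple point is $xyz=(\text{unit})\cdot s^r$, with no $x,y,z$-dependent corrections of lower order in $s$. This is exactly the content of $d$-semistability: the isomorphism $T^1_X\cong\mathcal O_{\Sing(X)}$ forces the local smoothing parameters at all double curves and triple points to coincide, up to units, with the single global coordinate $t$, and this is where one leans on the cited work of Friedman (and, in the analytic and logarithmic setting, of Kawamata–Namikawa). A secondary point to keep in mind is that ``$X$ is smoothable'' requires only that the general fibre be smooth, which holds even if $r=\ord_0(t\circ\gamma)>1$; producing a semistable model in addition would call for a base change $s\mapsto s^{1/r}$ followed by a small resolution, which is not needed for the statement.
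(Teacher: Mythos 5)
Your proposal follows essentially the same route as the paper: take the deformation supplied by the preceding theorem (a point of $\{\tilde f_1=\tilde f_2=0\}$ with $t\not\equiv 0$), restrict to a one-parameter arc, and use the fact that the first nonvanishing Kodaira--Spencer class lies in $H^0(T_X^1)\cong\mathbb{C}$ (hence is nowhere zero by $d$-semistability) to obtain the local model $x_1x_2x_3=(\text{unit})\cdot s^k$, resp.\ $xy=(\text{unit})\cdot s^k$, whose general fibre is smooth. Your write-up merely spells out the arc-selection and the local check in more detail than the paper does; the argument is the same.
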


\begin{proof} Consider the total space of the deformation constructed in the theorem above. If the deformation was transverse to the subspace $t = 0$, the total space is smooth. If the deformation was not transverse to $t=0$ (say, with tangency $k$), it means that it is locally trivial up to $k-1$'st order, and the image of its $k$'th Kodaira-Spencer differential is a non-trivial element in $H^0(X, T_X^1)$, which means that this deformation admits a local model of the form $x_1 x_2 x_3 = t^k$ (in particular, central fiber is a toroidal embedding).
\end{proof}

\section{Prospects and some calculations}

After proving that this surface is indeed smoothable, we can ask a few questions on some numerical invariants of this (smoothed) surface - obviously, the author knew these before actually proving it is smoothable, but the exposition turned out to not need these calculations. In this section to lower the amount of indices a bit, let us denote the smoothed surface as $\tilde{X}$.

The euler characteristic of $\tilde{X}$ equals the euler characteristic with compact support of $X \setminus \Sing(X)$. (it is true for any toroidal degeneration).

\[ \chi(\tilde{X}) = \chi^c(X \setminus \Sing(X)) = 12 - 1 = 11 \]

It is also easy to see that $K_X$ is big, hence the same holds for $\tilde{X}$, so it is a surface of general type.

Also, from $h^{1,0} (\tilde{X}) = h^{2,0} (\tilde{X}) = 0$ we can calculate that $h^{1, 1} = 9$.

On the basis of these calculations the author suspects that $\tilde{X}$ lies in a deformation class of the Barlow surface. Probably the cheapest way of doing this would be just constructing this degeneration. Saying anything useful about the fundamental group of $\tilde{X}$ (say, proving that it is $0$) also sounds reasonable.

Now we would like to discuss the meaning of these results and possible ramifications. The methods discussed above work in a situation where $H^2(T_X^0) = 0$ (which is a frequent occasion) and the d-semistable locus has expected dimension. This condition, however, seems hard to check. Possibly, the situation could be improved with some compactification of the space of constructs instead of the explicit calculation of asymptotics.

The arguments about the deformations should probably be recast in terms of logarithmic deformation theory - yet author didn't manage to significantly improve the result (say, guarantee the smoothability directly from $h^1(T_X(log)) > h^2(T_X(log))$).

Also, there is an infinite amount of non-collapsible complexes (even up to non-increasing dimension extension-collapse equivalence) and even bigger amount of complexes with point-like rational cohomology. Yet, there is only finite amount of deformation classes of surfaces of general type with $h^{1,0} = h^{2,0} = 0$. The discrepancy might have two sources. The first one is plain and simple - some complexes might not model any construct.

\begin{definition} \textbf{Combinatorial construct} is a simplicial complex with numbers on $n_{ij}$ on half edges, such that for any vertex $x$ the matrix of incidence of the link of $x$ with diagonal entries $n_{xi}$ has positive index of inertia at most $1$.
\end{definition}

This is clearly a needed restriction due to Hodge's index theorem. However, we have no idea whether this data is enough to build an actual construct (and if there even exists a simple combinatorial criterion for the graph of intersections of curves to be representable on a rational surface).

However, even this naive combimatorial restriction might significantly lower the space of possibilities. For example, the only $2$-dimensional surfaces admitting the structure of combinatorial construct are $\mathbb{S}^2, \mathbb{RP}^2, \mathbb{T}^2$. The proof (we learned it from \cite{KS}) can be derived from the fact that the structure of combinatorial construct will actually determine the integral-affine structure on the surface with positively charged singularities, and then just application of integral version of Gauss-Bonnet theorem.

\textbf{Hope/Wish.} Possibly, there is some notion of ''integral'' positive curvature which makes sense for complexes (determined either in terms of combinatorial construct or maybe some kind of tropical refinement of this notion), which cuts only a finite amount of complexes.

Yet, there is another possible source of discrepancy between the amount of surfaces of general type and amount of complexes. Namely, two entirely different (not expansion-collapse equivalent) complexes could very well be the degenerations of the same surface. The hope of circumventing this problem by ''going around the boundary of the moduli space and making some elementary rearrangements'' is too naive to work - even for curves there is Ezra Getzler's relation which does not amount to the elementary homotopy equivalence on the dual complex. However, author still keeps maybe even more naive hope that it is possible to somehow capture \textit{more} information from this complex than just the holomorphic part of the algebra of cohomology. We would imagine some mix between the theory of elementary collapses and theory of rational homotopy type, say, one could come up with the notion of elementary collapse of dg-algebra, stricter than quasi-isomorphism.

The other directly approachable question in this regard would probably be understanding something about the fundamental group.

\pagebreak

\end{document}